\newcommand{\bq}{\begin{equation}}
\newcommand{\eq}{\end{equation}}
\newcommand{\bc}{\begin{center}}
\newcommand{\ec}{\end{center}}
\newcommand{\bit}{\begin{itemize}}
\newcommand{\eit}{\end{itemize}}
\newcommand{\ben}{\begin{enumerate}}
\newcommand{\een}{\end{enumerate}}
\theoremstyle{plain}
\newtheorem{theorem}{Theorem}[section]
\newtheorem*{theorem*}{Theorem}
\newtheorem{proposition}[theorem]{Proposition}
\newtheorem{lemma}[theorem]{Lemma}
\newtheorem{corollary}[theorem]{Corollary}
\newtheorem{remark}[theorem]{Remark}
\newtheorem{definition}[theorem]{Definition}
\newtheorem{conjecture}[theorem]{Conjecture}
\begin{document}

\journal{(internal report CC23-7)}

\begin{frontmatter}

\title{Analytical proofs for the properties of the probability mass function of the Poisson distribution of order $k$}

\author[cc]{S.~R.~Mane}
\ead{srmane001@gmail.com}
\address[cc]{Convergent Computing Inc., P.~O.~Box 561, Shoreham, NY 11786, USA}

\begin{abstract}
The Poisson distribution of order $k$ is a special case of a compound Poisson distribution.
For $k=1$ it is the standard Poisson distribution.
Our main result is a proof that for sufficiently small values of the rate parameter $\lambda$,
the probability mass function (pmf) decreases monotonically for all $n\ge k$
(it is known that the pmf increases strictly for $1\le n \le k$, for fixed $k\ge2$ and all $\lambda>0$).
The second main result is a partial proof that the difference (mean $-$ mode) does not exceed $k$.
The term `partial proof' signifies that the derivation is conditional on an assumption
which, although plausible and supported by numerical evidence, is as yet not proved.
This note also presents new inequalities, and sharper bounds for some published inequalities, for the Poisson distribution of order $k$.
\end{abstract}

\vskip 0.25in

\begin{keyword}
Poisson distribution of order $k$
\sep probability mass function
\sep improved bounds
\sep Compound Poisson distribution  
\sep discrete distribution 

\MSC[2020]{
60E05  
\sep 39B05 
\sep 11B37  
\sep 05-08  
}


\end{keyword}

\end{frontmatter}

\newpage
\setcounter{equation}{0}
\section{\label{sec:intro} Introduction}
The Poisson distribution of order $k$ is a special case of a compound Poisson distribution introduced by Adelson \cite{Adelson1966}.
The definition below is from \cite{KwonPhilippou}, with slight changes of notation.
\begin{definition}
  \label{def:pmf_Poisson_order_k}
  The Poisson distribution of order $k$ (where $k\ge1$ is an integer) and parameter $\lambda > 0$
  is an integer-valued statistical distribution with the probability mass function (pmf)
\bq
\label{eq:pmf_Poisson_order_k}
f_k(n;\lambda) = e^{-k\lambda}\sum_{n_1+2n_2+\dots+kn_k=n} \frac{\lambda^{n_1+\dots+n_k}}{n_1!\dots n_k!} \,, \qquad n=0,1,2\dots
\eq
\end{definition}
\noindent
For $k=1$ it is the standard Poisson distribution.
Several results about the structure of the pmf of the Poisson distribution of order $k$ are known.
For example, unlike the standard Poisson distribution,
for fixed $k\ge2$ and $\lambda>0$ the elements $f_k(n;\lambda)$, $n=1,\dots,k$ form a {\em strictly increasing} sequence (Lemma 1 in \cite{KwonPhilippou}).
The structure of the pmf of the Poisson distribution of order $k$ was mapped numerically in a recent note by the author \cite{Mane_Poisson_k_CC23_6}.
It was observed that the Poisson distribution of order $k\ge2$ can exhibit as many as {\em four} peaks simultaneously.
It was also stated in \cite{Mane_Poisson_k_CC23_6}, based on numerical observations but without proof,
that for sufficiently small values of the rate parameter $\lambda$, the pmf decreases monotonically for all values $n\ge k$.
Such an intuitive result should have a simple (and hopefully elegant) proof and should not rely on numerical simulations.
Our main result is a proof of the monotonic decrease for $n\ge k$, under conditions to be specified below.
The second main result is a partial proof that the difference (mean $-$ mode) does not exceed $k$.
The term `partial proof' signifies that the derivation is conditional on an assumption
which, although plausible and supported by numerical evidence, is as yet not proved.
In addition, some new inequalities, as well as sharper bounds for published inequalities, are presented.
The proofs in this note are analytical and do not rely on numerical simulations.

The structure of this paper is as follows.
Sec.~\ref{sec:notation} presents basic definitions and notation.
Sec.~\ref{sec:ineq} presents proofs of improvements to published inequalities and new inequalities.
Sec.~\ref{sec:pmf} presents a proof that the pmf decreases monotonically for all values $n\ge k$,
for fixed $k\ge2$ and a sufficiently small value of the rate parameter $\lambda>0$.
Sec.~\ref{sec:mode} presents a `partial proof' that the difference (mean $-$ mode) does not exceed $k$.
Sec.~\ref{sec:conc} concludes.

\newpage
\setcounter{equation}{0}
\section{\label{sec:notation}Basic notation and definitions}
For later reference we define $\kappa=k(k+1)/2$
and denote the mean by $\mu_k(\lambda)$ and the mode by $m_k(\lambda)$.
Instead of the probability mass function (pmf) $f_k(n;\lambda)$ in eq.~\eqref{eq:pmf_Poisson_order_k},
we follow \cite{KwonPhilippou} and work with $h_k(n;\lambda) = e^{k\lambda}f_k(n;\lambda)$.
The following facts will be useful below.
\begin{enumerate}
\item
Observe that $h_k(n;\lambda)$ is a polynomial in $\lambda$ with all positive coefficients.  
\item
For $n=0$ then $h_k(0;\lambda)=1$ for all values of $k$ and $\lambda$.
\item
For $n>0$ and $k\ge1$, $h_k(n;\lambda)$ has no constant term, so $h_k(n;0)=0$ and $h_k(n;\lambda)$ is strictly positive and increasing in $\lambda$ for $\lambda>0$.
\item
For $n>0$ and $k\ge1$, the polynomial $h_k(n;\lambda)$ has degree $n$ because the highest power of $\lambda$ which appears is given by the tuple $n_1=1$ and all the other $n_i$ are zero.
The corresponding term is $\lambda^n/n!$ and is the only term of this degree.
\item
For $k=1$, then $h_1(n;\lambda) = \lambda^n/n!$.
\item
For $n=1$, note that $h_k(1;\lambda)=\lambda$ for all $k\ge1$.
\item
For $n=2$, note that $h_k(2;\lambda) = \frac12\lambda^2 + \lambda$ for all $k\ge2$.
\end{enumerate}
The parameter $r_k$ was defined in \cite{KwonPhilippou} as the positive root of the equation $h_k(k;\lambda)=1$.
It was shown in \cite{KwonPhilippou} that $r_k$ is unique and $0 < r_k < 1$.
For later use below, we introduce the parameter $r_{k,n,c}$ as the unique positive root of the equation $h_k(n;\lambda)=c$ for $c>0$.
\begin{lemma}
For fixed $k\ge1$ and $n\ge1$, the equation $h_k(n;\lambda) = c$ has exactly one positive real root for $\lambda$, denoted by $r_{k,n,c}$.
\end{lemma}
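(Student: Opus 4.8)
The plan is to regard $g(\lambda) := h_k(n;\lambda)$ as a function of the single real variable $\lambda$ on $[0,\infty)$ and to split the claim into \emph{existence} (at least one positive root) and \emph{uniqueness} (at most one). Existence will follow from the intermediate value theorem, and uniqueness from strict monotonicity; both ingredients are essentially already recorded in the list of facts preceding the statement, so the task is mainly to assemble them correctly.

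First I would note that $g$ is a polynomial in $\lambda$, hence continuous on $[0,\infty)$, with $g(0)=0$ (fact 3, no constant term) and leading term $\lambda^n/n!$ of degree $n\ge1$ (fact 4). Consequently $g(\lambda)\to+\infty$ as $\lambda\to+\infty$. For any fixed $c>0$ we therefore have $g(0)=0<c$ while $g(\lambda)>c$ for all sufficiently large $\lambda$, so by the intermediate value theorem there exists at least one $\lambda^{\ast}>0$ with $g(\lambda^{\ast})=c$.

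For uniqueness I would establish that $g$ is strictly increasing on $(0,\infty)$. This is asserted in fact 3, but to keep the argument self-contained I would differentiate: writing $g(\lambda)=\sum_{j=1}^{n} a_j\lambda^{j}$ with every $a_j\ge0$ and $a_n=1/n!>0$ (facts 1 and 4), its derivative $g'(\lambda)=\sum_{j=1}^{n} j\,a_j\lambda^{j-1}$ has only nonnegative coefficients and satisfies $g'(\lambda)\ge n\,a_n\lambda^{n-1}>0$ for every $\lambda>0$. Hence $g$ is strictly increasing on $(0,\infty)$ and attains each value at most once; combined with the existence argument this pins down a unique positive root $r_{k,n,c}$.

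The only point requiring care---and thus the closest thing to an obstacle---is the strict monotonicity underpinning uniqueness: if $g$ were merely weakly increasing it could in principle be constant on some subinterval, which would destroy uniqueness. The positivity of the coefficients (fact 1, together with the explicit positive leading coefficient $1/n!$ from fact 4) is precisely what rules this out, forcing $g'>0$ throughout $(0,\infty)$. Everything else is a routine application of continuity and the intermediate value theorem.
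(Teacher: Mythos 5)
Your proposal is correct and follows essentially the same route as the paper: existence via the intermediate value theorem (using $g(0)=0$, continuity, and unboundedness) and uniqueness via strict monotonicity of $h_k(n;\cdot)$ on $(0,\infty)$. The only cosmetic differences are that you treat $k=1$ and $k\ge2$ uniformly (the paper handles $k=1$ separately by solving explicitly) and you justify the strict increase by differentiating the positive-coefficient polynomial rather than citing the fact recorded before the lemma; both are fine.
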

\begin{proof}
We dispose of the case $k=1$ immediately because $h_1(n;\lambda) = \lambda^n/n!$ and the root is $\lambda=(cn!)^{1/n}$, which is clearly unique and positive.
We treat only $k\ge2$ below.
The proof employs the Intermediate Value Theorem.
Recall that for $k>0$ and $n>0$, $h_k(n;0)=0$ and $h_k(n;\lambda)$ is strictly positive and increasing in $\lambda$ for $\lambda>0$.
Also, because $h_k(n;\lambda)$ is a polynomial, its value is unbounded as $\lambda\to\infty$.
Hence, for fixed $k>0$ and $n>0$, there exists $\bar\lambda_{k,n,c}>0$ such that for $h_k(n;\lambda)>c$ for all $\lambda>\bar\lambda_{k,n,c}$.
Hence by the Intermediate Value Theorem, the equation $h_k(n;\lambda)=c$ has a root $r_{k,n,c}$ in the interval $r_{k,n,c}\in(0,\bar\lambda_{k,n,c}]$.
Because $h_k(n;\lambda)$ is strictly increasing in $\lambda$ for $\lambda \ge 0$, the root $r_{k,n,c}$ is unique.
\end{proof}
\begin{remark}
  We can estimate an upper bound for $r_{k,n,c}$.
  As noted above, the highest power of $\lambda$ in $h_k(n;\lambda)$ is given by the term $\lambda^n/n!$.
  Hence $h_k(n;\lambda) \ge \lambda^n/n!$ for $\lambda>0$.
  Set $\lambda^n/n!=c$ to deduce $r_{k,n,c} \le (cn!)^{1/n}$.
\end{remark}
\begin{remark}
  We can estimate a better upper bound for $r_{k,n,c}$ if $n$ is a multiple of $k$.
  Then the {\em lowest} power of $\lambda$ in $h_k(n;\lambda)$ is given by the tuple $n_k=n/k$ and the other $n_i$ are zero.
  The corresponding term in $h_k(n;\lambda)$ is $\lambda^{n/k}/(n/k)!$ and is the only term of this degree.
  Hence $h_k(k;\lambda) \ge \lambda^{n/k}/(n/k)!$ for $\lambda>0$.
  Set $\lambda^{n/k}/(n/k)!=c$ to deduce $r_{k,n,c} \le (c(n/k)!)^{k/n}$.
  In particular, if $n=k$ then $r_{k,n,c} \le c$.
\end{remark}
\begin{remark}
For $n=2$, then $h_k(2;\lambda) = \frac12\lambda^2 + \lambda$ for all $k\ge2$.
Hence we determine $r_{k,2,c}$ exactly by solving the quadratic equation $\frac12\lambda^2 + \lambda - c = 0$.
The solution (positive root) is 
\bq
\label{eq:r_k2c}
r_{k,2,c} = \sqrt{2c+1}-1 \,.
\eq
\end{remark}
\begin{remark}
Next consider the case $n=k>2$.
The term in $\lambda$ is $\lambda^1/1! = \lambda$.
The term in $\lambda^2$ is given by the tuples ($n_1=1$, $n_{k-1}=1$) and ($n_2=1$, $n_{k-2}=1$), etc.~(with all other $n_i$ zero).
Counting the factorial denominators yields
\bq
h_k(k;\lambda) = \lambda +\frac{k-1}{2}\lambda^2 + O(\lambda^3) \,.
\eq
Hence we can improve the upper bound for $r_{k,k,c}$ by solving for $\frac12(k-1)\lambda^2 + \lambda - c = 0$.
The positive root yields an improved upper bound for $r_{k,k,c}$ as follows
\bq
\label{eq:r_kkc_bound}
r_{k,k,c} < \frac{2c}{\sqrt{2c(k-1)+1}+1} \,.
\eq
\end{remark}
\noindent
The most important case below is $c=2$ so we define $t_k=r_{k,k,2}$ (for `two') to avoid clumsy notation.
For most of the rest of this note, we shall hold $k\ge2$ and $\lambda>0$ fixed and vary only the value of $n$.
For brevity of the exposition, we adopt the notation by Kostadinova and Minkova \cite{KostadinovaMinkova2013}
and write ``$p_n$'' in place of $h_k(n;\lambda)$ and mostly omit explicit mention of $k$ and $\lambda$. 
The recurrence for $p_n$ is as follows (from eq.~(6) in \cite{KwonPhilippou}, terms with negative indices are set to zero).
\bq
\label{eq:KP_rec_pn}
p_n = \frac{\lambda}{n} \,\sum_{j=1}^k jp_{n-j} \,.
\eq
Kostadinova and Minkova also published the following recurrence, which contain four terms for any values of $n$ and $k$.
(Proposition 1 in \cite{KostadinovaMinkova2013}, terms with negative indices are set to zero.)
\bq
\label{eq:KMrec}
p_n = \Bigl(2 + \frac{\lambda-2}{n}\Bigr)p_{n-1}
-\Bigl(1 -\frac{2}{n}\Bigr)p_{n-2}
-\frac{k+1}{n}\,\lambda p_{n-k-1}
+\frac{k}{n}\,\lambda p_{n-k-2} \,.
\eq
The following two expressions will be useful below.
Again, terms with negative indices are set to zero.
\begin{enumerate}
\item
Using eq.~\eqref{eq:KP_rec_pn} yields
\bq
\label{eq:KP_pn1_minus_pn}
\begin{split}
p_{n+1} - p_n &= \frac{\lambda}{n+1} \biggl(\sum_{j=1}^k jp_{n+1-j}\biggr) - \frac{\lambda}{n} \biggr(\sum_{j=1}^k jp_{n-j}\biggr)
\\
&= \frac{\lambda}{n(n+1)} \biggl\{\, np_n +(n-1)p_{n-1} +\dots +(n-k+1)p_{n-k+1} \,\biggr\} -\frac{k}{n}\,\lambda p_{n-k} 
\\
&= \frac{\lambda}{n(n+1)} \biggl(\sum_{j=0}^{k-1} (n-j)p_{n-j}\biggr) - \frac{k}{n}\,\lambda p_{n-k} \,.
\end{split}
\eq
\item
Using eq.~\eqref{eq:KMrec} yields 
\bq
\label{eq:KM_pn1_minus_pn}
p_n - p_{n-1} = \frac{\lambda}{n}p_{n-1} + \frac{n-2}{n}(p_{n-1}-p_{n-2})
-\frac{k+1}{n}\,\lambda p_{n-k-1}
+\frac{k}{n}\,\lambda p_{n-k-2} \,.
\eq
\end{enumerate}

\newpage
\setcounter{equation}{0}
\section{\label{sec:ineq}Inequalities}
\subsection{Alternative proof of Lemma 1 in \cite{KwonPhilippou}}
\begin{lemma}
\label{lemma:KP_Lemma1_restatement}  
(Restatement of Lemma 1 in Kwon and Philippou \cite{KwonPhilippou}, with notation employed in this note.)
For $2 \le n \le k$ and a fixed $\lambda>0$,
\bq  
\lambda \le p_{n-1} < p_n \,.
\eq
\end{lemma}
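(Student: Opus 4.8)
The plan is to prove the two inequalities in the opposite of their written order: first establish the strict increase $p_{n-1} < p_n$ on the range $2 \le n \le k$, and then obtain the lower bound $\lambda \le p_{n-1}$ as an immediate consequence of this monotonicity together with the identity $p_1 = \lambda$ (fact 6 in Sec.~\ref{sec:notation}).

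For the strict increase I would work directly from the difference formula eq.~\eqref{eq:KP_pn1_minus_pn}, whose derivation is pure algebra on the recurrence and hence holds for every index under the convention that terms with negative indices vanish. Replacing $n$ by $n-1$ there gives
\bq
p_n - p_{n-1} = \frac{\lambda}{(n-1)n}\sum_{j=0}^{k-1}(n-1-j)\,p_{n-1-j}\;-\;\frac{k\lambda}{n-1}\,p_{n-1-k}\,.
\eq
The crucial observation is that for $n \le k$ the index $n-1-k$ is strictly negative, so the last term vanishes by the negative-index convention. What remains is $\lambda/[(n-1)n]$ times a sum in which every surviving summand is a product of a nonnegative weight $(n-1-j)$ and a strictly positive factor $p_{n-1-j}$: the summands with $j \ge n-1$ either carry weight zero (at $j=n-1$) or have a vanishing $p$-factor (for $j>n-1$), while the $j=0$ term equals $(n-1)\,p_{n-1} > 0$. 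Hence the sum is strictly positive, and since $\lambda>0$ we conclude $p_n - p_{n-1} > 0$. The restriction $n \ge 2$ is exactly what guarantees $n-1 \ge 1$, so that the factor $p_{n-1}$ and the denominator $(n-1)n$ are both legitimate and positive.

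For the lower bound, recall $p_1 = \lambda$. Chaining the strict inequalities just proved yields $\lambda = p_1 < p_2 < \dots < p_{n-1}$ whenever $n-1 \ge 2$, and the boundary case $n=2$ gives $p_{n-1} = p_1 = \lambda$; in every case $\lambda \le p_{n-1}$, with equality precisely at $n=2$.

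I do not anticipate a genuine obstacle here: once eq.~\eqref{eq:KP_pn1_minus_pn} is in hand the argument is essentially bookkeeping, and the only point demanding care is the correct treatment of the negative- and zero-index boundary terms in the sum. The single substantive ingredient is the vanishing of the $p_{n-1-k}$ term on the range $n \le k$, which is exactly what distinguishes this regime from the $n>k$ regime analysed later in Sec.~\ref{sec:pmf}; there the same term is positive and is what can destroy monotonicity, so its disappearance here is the heart of the matter.
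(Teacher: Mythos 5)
Your proof is correct, but it follows a different route from the paper's own proof---one the paper itself sketches as an alternative. The paper's proof runs by induction on $n$ using the four-term Kostadinova--Minkova identity eq.~\eqref{eq:KM_pn1_minus_pn}: for $2\le n\le k$ the terms $p_{n-k-1}$ and $p_{n-k-2}$ vanish, leaving $p_n-p_{n-1}=\frac{\lambda}{n}p_{n-1}+\frac{n-2}{n}(p_{n-1}-p_{n-2})$, and positivity is propagated forward from the base case $p_2-p_1=\frac12\lambda^2>0$. You instead use the Kwon--Philippou difference formula eq.~\eqref{eq:KP_pn1_minus_pn} shifted to index $n-1$, note that the lone negative term $-\frac{k\lambda}{n-1}\,p_{n-1-k}$ vanishes for $n\le k$ by the negative-index convention, and conclude directly, with no induction, since what remains is a sum of nonnegative terms whose $j=0$ summand $(n-1)p_{n-1}$ is strictly positive; your boundary bookkeeping (the $j=n-1$ term killed by a zero weight, the $j>n-1$ terms killed by the convention) and your handling of the equality case $n=2$ in $\lambda\le p_{n-1}$ are both exactly right. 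The paper in fact anticipates your argument: the remark immediately following its proof observes that the lemma ``could also'' be proved from eq.~\eqref{eq:KP_pn1_minus_pn} because $p_{n-k}=0$ for $n<k$, and your write-up is essentially that remark carried out in full detail. As for what each approach buys: yours is a one-step, manifestly positive decomposition that makes transparent that the $-\frac{k}{n}\lambda p_{n-k}$ term is the sole obstruction to monotone growth---precisely the mechanism exploited later for $n\ge 2k$ in Prop.~\ref{prop:inductionproof}---while the paper's inductive route exhibits $p_n-p_{n-1}$ as driven by the previous difference, a recursive style of argument that is marginally heavier here but self-contained within the Kostadinova--Minkova recurrence.
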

\begin{proof}
Consider $n \in [2,k]$, so $p_{n-k-1}=p_{n-k-2}=0$ in eq.~\eqref{eq:KM_pn1_minus_pn}.
We proceed by induction on $n$.
First suppose that $p_{n-1}-p_{n-2}>0$. Then eq.~\eqref{eq:KM_pn1_minus_pn} yields
\bq
\label{eq:KM_pn1_minus_pn_diff} 
p_n - p_{n-1} = \frac{\lambda}{n}p_{n-1} + \frac{n-2}{n}(p_{n-1}-p_{n-2}) > 0\,.
\eq
The right hand side is positive because $n\ge2$ and $p_{n-1}>0$ for $\lambda>0$.
Next note that $p_2-p_1 = (\lambda/2) p_1 = \frac12\lambda^2 > 0$ for $\lambda>0$.
Hence the result follows by induction on $n$.
Also $p_n > p_1=\lambda$.
\end{proof}
\begin{remark}
We could also prove the result using eq.~\eqref{eq:KP_pn1_minus_pn} because $p_{n-k}=0$ for $n<k$.
Hence $p_{n+1}-p_n>0$ because it equals a sum of positive terms for all the relevant values of $n$ in Lemma \ref{lemma:KP_Lemma1_restatement}.
\end{remark}
\begin{remark}
We obtain the following strictly increasing sequence for fixed $k\ge2$, $\lambda>0$ and $n=1,\dots,k$.
\bq
\label{eq:KP_incseq_pn}  
\lambda = p_1 < p_2 < \dots < p_k \,.
\eq
\end{remark}

\subsection{Improved upper bound for $r_k$}
Recall that Kwon and Philippou \cite{KwonPhilippou} defined $r_k$ as the unique positive root of $h_k(k;\lambda)=1$ and they proved that $0 < r_k < 1$.
We can improve the upper bound as follows.
Set $c=1$ in eq.~\eqref{eq:r_kkc_bound} to obtain
\bq
\label{eq:rk_new_upbound} 
r_k < \frac{2}{\sqrt{2k-1}+1} \,.
\eq
For $k=1$, eq.~\eqref{eq:rk_new_upbound} yields the exact answer $r_1=1$.
For $k=2$, eq.~\eqref{eq:rk_new_upbound} also yields the exact answer $r_2 = \sqrt{3}-1$ (see eq.~\eqref{eq:r_k2c} with $c=1$).
For $k>2$, eq.~\eqref{eq:rk_new_upbound} yields an upper bound.

\subsection{Improved bound for Lemma 3 in \cite{KwonPhilippou}}
\begin{lemma}
(Restatement of Lemma 3 in Kwon and Philippou \cite{KwonPhilippou}, with notation employed in this note.)
For $k \ge 2$ and $0 < \lambda \le r_k < 1$,
\bq
\label{eq:pk_gt_pk1}
p_k > p_{k+1} \,.
\eq
\end{lemma}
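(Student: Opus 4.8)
The plan is to reduce the claim to a single inequality on a weighted sum of the $p_i$, obtained by specializing the difference formula \eqref{eq:KP_pn1_minus_pn} to $n=k$, and then to bound that sum using the strictly increasing sequence \eqref{eq:KP_incseq_pn} together with the hypothesis $\lambda \le r_k$.

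First I would set $n=k$ in \eqref{eq:KP_pn1_minus_pn}. Since $p_{n-k}=p_0=1$, the last term becomes simply $-\lambda$, and re-indexing the sum ($i=k-j$) gives
\bq
p_{k+1} - p_k = \frac{\lambda}{k(k+1)}\sum_{i=1}^{k} i\,p_i - \lambda = \lambda\left(\frac{1}{k(k+1)}\sum_{i=1}^{k} i\,p_i - 1\right).
\eq
Because $\lambda>0$, proving $p_k > p_{k+1}$ is therefore equivalent to proving $\sum_{i=1}^{k} i\,p_i < k(k+1)$.

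Next I would estimate the weighted sum using the strictly increasing chain $p_1 < p_2 < \dots < p_k$ from \eqref{eq:KP_incseq_pn}. For $k\ge2$ there is at least one index $i<k$, so $p_i < p_k$ strictly for those terms, and
\bq
\sum_{i=1}^{k} i\,p_i < p_k \sum_{i=1}^{k} i = \kappa\, p_k,
\eq
with $\kappa=k(k+1)/2$. It then suffices to control $p_k$. Under the hypothesis $0<\lambda\le r_k$, monotonicity of $h_k(k;\cdot)$ in $\lambda$ and the defining relation $h_k(k;r_k)=1$ give $p_k = h_k(k;\lambda)\le 1$. Hence $\sum_{i=1}^{k} i\,p_i < \kappa = k(k+1)/2 < k(k+1)$, the bracket in the displayed identity is negative, and $p_k > p_{k+1}$ follows.

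The argument is short, so there is no single hard step; the points that require care are the treatment of the boundary term $p_0=1$ (it supplies the decisive $-\lambda$, and omitting it would wrongly make the difference positive) and the strictness of the chain bound (which needs $k\ge2$). Finally, I would point out where the improvement over Lemma 3 comes from: the estimate $\sum_{i=1}^k i\,p_i < \kappa\,p_k$ already yields $p_k > p_{k+1}$ whenever $\kappa\,p_k \le k(k+1)$, i.e.\ whenever $p_k\le 2$. Since $p_k=h_k(k;\lambda)$ is increasing in $\lambda$, this is exactly $\lambda\le t_k = r_{k,k,2}$, and because $h_k(k;\lambda)=2>1=h_k(k;r_k)$ forces $t_k>r_k$, the conclusion in fact holds on the strictly larger interval $0<\lambda\le t_k$, which is the advertised sharpening.
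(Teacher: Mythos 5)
Your proof is correct and takes essentially the same route as the paper: specializing \eqref{eq:KP_pn1_minus_pn} at $n=k$ (so $p_0=1$ contributes the $-\lambda$), bounding the weighted sum via the strict chain \eqref{eq:KP_incseq_pn} to get $\sum_{i=1}^k i\,p_i < \kappa\,p_k$, and concluding from $p_k\le1$ when $0<\lambda\le r_k$. Your closing remark that the same estimate works whenever $p_k\le2$ is exactly the paper's subsequent sharpening (Prop.~\ref{prop:tk}, the bound $\lambda\le t_k$).
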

\noindent
Note the following: set $n=k$ in eq.~\eqref{eq:KP_pn1_minus_pn}, then $p_{n-k}=p_0=1$ and we obtain
\bq
\label{eq:KPLemma3_diff}
p_{k+1} - p_k = \frac{\lambda}{k(k+1)} \biggl(\sum_{j=1}^k jp_j\biggr) - \lambda \,.
\eq
Next from eq.~\eqref{eq:KP_incseq_pn}, $p_j < p_k$ for $j=1,\dots k-1$, which yields the inequality
\bq
\label{eq:pk1_minus_pk}
\begin{split}
p_{k+1} - p_k &< \frac{\lambda p_k}{k(k+1)} \biggl(\sum_{j=1}^k j\biggr) - \lambda 
\\
&= \lambda\biggl(\frac{p_k}{2} - 1 \biggr) \,.
\end{split}
\eq
Kwon and Philippou \cite{KwonPhilippou} set $\lambda=r_k$ in their Lemma 3,
whence $p_k=1$ so the rhs in eq.~\eqref{eq:pk1_minus_pk} is negative
\bq
p_{k+1} - p_k < r_k\Bigl(\frac{1}{2} - 1 \Bigr) = -\frac{r_k}{2} < 0 \,.
\eq
Hence $\lambda \le r_k$ is a sufficient but not necessary bound.
It is possible to do better.
\begin{proposition}
\label{prop:tk}
For fixed $k \ge 2$ and $0 < \lambda \le t_k$, i.e.~$p_k \le 2$, then $p_k > p_{k+1}$.
\end{proposition}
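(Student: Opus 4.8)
The plan is to read off the result directly from the strict inequality already established in eq.~\eqref{eq:pk1_minus_pk}, namely
\[
p_{k+1} - p_k < \lambda\Bigl(\frac{p_k}{2} - 1\Bigr),
\]
which holds for every fixed $k \ge 2$ and every $\lambda > 0$. The key feature I would exploit is that this inequality is \emph{strict}: it was obtained by replacing each $p_j$ with $j<k$ by the strictly larger quantity $p_k$, using the strictly increasing sequence of eq.~\eqref{eq:KP_incseq_pn}. So almost all of the work is already done, and the proposition amounts to substituting the hypothesis into this bound.

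First I would translate the hypothesis on $\lambda$ into a hypothesis on $p_k$. By definition $t_k = r_{k,k,2}$ is the unique positive root of $h_k(k;\lambda) = 2$, and $p_k = h_k(k;\lambda)$ is strictly increasing in $\lambda$; hence the condition $0 < \lambda \le t_k$ is equivalent to $0 < p_k \le 2$. Under this hypothesis the bracketed factor satisfies $p_k/2 - 1 \le 0$, so (recalling $\lambda > 0$) the right-hand side of the displayed inequality is $\le 0$. Chaining this with the strict inequality itself gives $p_{k+1} - p_k < 0$, i.e.\ $p_k > p_{k+1}$, which is the claim.

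The only step that will warrant care is the endpoint $\lambda = t_k$, where $p_k = 2$ exactly and the bracketed factor vanishes. Here the upper bound $\lambda(p_k/2 - 1)$ equals zero rather than being strictly negative, so a merely non-strict bound would leave the conclusion in doubt at this single value of $\lambda$. But because eq.~\eqref{eq:pk1_minus_pk} is already strict, we still deduce $p_{k+1} - p_k < 0$ at the endpoint, so the strict conclusion $p_k > p_{k+1}$ extends to the full closed interval $0 < \lambda \le t_k$ with no separate argument needed. I expect this boundary subtlety to be the only genuine obstacle; the remainder is essentially a substitution into an inequality the paper has already proved, and it simultaneously shows that the bound $\lambda \le t_k$ (equivalently $p_k \le 2$) is a strict improvement over the sufficient condition $\lambda \le r_k$ (equivalently $p_k \le 1$) used by Kwon and Philippou.
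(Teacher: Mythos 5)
Your proof is correct and is essentially the paper's own argument: the paper likewise proves the proposition by setting $p_k\le 2$ in the strict inequality of eq.~\eqref{eq:pk1_minus_pk} to conclude $p_{k+1}-p_k<0$, with $\lambda=t_k$ corresponding exactly to $p_k=2$. Your additional care about the endpoint (relying on the strictness of eq.~\eqref{eq:pk1_minus_pk}) and the explicit equivalence $0<\lambda\le t_k \Leftrightarrow 0<p_k\le 2$ via monotonicity simply spells out what the paper leaves implicit.
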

\begin{proof}
Set $p_k\le2$ in eq.~\eqref{eq:pk1_minus_pk}, then $p_{k+1} - p_k < 0$.
Exactly at $p_k=2$, the value of $\lambda$ is $t_k$.
\end{proof}
\begin{remark}
\label{remark:rkk2_upbound}
The constraint $p_k\le2$ (equivalently $\lambda\le t_k$) is also sufficient but not necessary.
The upper bound for $t_k$ itself is obtained by setting $c=2$ in eq.~\eqref{eq:r_kkc_bound}, viz.
\bq
t_k \le \frac{4}{\sqrt{4k-3}+1} \,.
\eq
\end{remark}
\begin{remark}
  Observe that the combination of eqs.~\eqref{eq:KP_incseq_pn} and \eqref{eq:pk_gt_pk1} and Prop.~\ref{prop:tk} imply that
  for fixed $k\ge2$ and $0 < \lambda \le t_k$, the point at $n=k$ is a local maximum of the pmf of the Poisson distribution of order $k$.
  This does {\em not} necessarily imply that it is a mode (global maximum).
  See the results in \cite{KwonPhilippou,PhilippouFibQ,Mane_Poisson_k_CC23_3}, when the point at $n=k$ is a mode.
  Note that the results in \cite{Mane_Poisson_k_CC23_3} are observations from numerical calculations.
\end{remark}

\subsection{\label{sec:KPLemma3a}Bound to obtain $p_{k+1}\ge p_k$}
Let us consider the opposite inequality $p_{k+1} \ge p_k$.
This is the domain when the point at $n=k$ ceases to be a local maximum.
\begin{proposition}
  For fixed $k\ge2$ and $\lambda \ge 2$, then $p_{k+1} \ge p_k$.
\end{proposition}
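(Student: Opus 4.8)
The plan is to work directly from the exact difference formula \eqref{eq:KPLemma3_diff}, namely $p_{k+1}-p_k = \frac{\lambda}{k(k+1)}\sum_{j=1}^k jp_j - \lambda$, and to show that under the hypothesis $\lambda\ge2$ the weighted sum $\sum_{j=1}^k jp_j$ is large enough to make the right-hand side nonnegative. Since the difference is already written as a single weighted sum minus $\lambda$, the whole problem reduces to producing a good \emph{lower} bound for $\sum_{j=1}^k jp_j$. This is precisely the mirror image of the argument leading to \eqref{eq:pk1_minus_pk}, where an \emph{upper} bound on the same sum (via $p_j<p_k$) delivered the opposite conclusion $p_{k+1}<p_k$; here I would instead bound the terms from below.

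The key step is to invoke the strictly increasing sequence \eqref{eq:KP_incseq_pn}, which gives $p_j\ge p_1=\lambda$ for every $j=1,\dots,k$ (with strict inequality once $j\ge2$). Summing against the weights $j$ and using the elementary identity $\sum_{j=1}^k j = k(k+1)/2$ then yields $\sum_{j=1}^k jp_j \ge \lambda\,k(k+1)/2$. Substituting this lower bound into \eqref{eq:KPLemma3_diff} would give
\bq
p_{k+1}-p_k \ge \frac{\lambda}{k(k+1)}\cdot\frac{\lambda k(k+1)}{2} - \lambda = \frac{\lambda(\lambda-2)}{2}\,,
\eq
which is nonnegative precisely when $\lambda\ge2$. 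This establishes $p_{k+1}\ge p_k$ and closes the argument.

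I do not expect any serious obstacle here: the only ingredient beyond the difference identity is the uniform lower bound $p_j\ge\lambda$, which is already available from Lemma \ref{lemma:KP_Lemma1_restatement} and eq.~\eqref{eq:KP_incseq_pn}. The one point worth verifying is that this crude $j$-independent bound is already sufficient (rather than needing a sharper, $j$-dependent estimate) — and it is, because the weights $j$ sum to exactly $k(k+1)/2$, which cancels the $k(k+1)$ denominator and leaves the clean threshold $\lambda/2\ge1$. In fact, since \eqref{eq:KP_incseq_pn} gives $p_j>\lambda$ strictly for $j\ge2$ and $k\ge2$ guarantees at least one such term, the sum inequality is strict, so one actually obtains the strict conclusion $p_{k+1}>p_k$ for all $\lambda\ge2$, marginally stronger than the stated $p_{k+1}\ge p_k$.
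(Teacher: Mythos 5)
Your proposal is correct and is essentially the paper's own proof: both substitute the uniform lower bound $p_j \ge p_1 = \lambda$ (from eq.~\eqref{eq:KP_incseq_pn}) into the exact identity \eqref{eq:KPLemma3_diff}, use $\sum_{j=1}^k j = k(k+1)/2$ to obtain $p_{k+1}-p_k \ge \lambda^2/2 - \lambda$, and conclude for $\lambda\ge2$. Your closing remark that the inequality is actually strict (since $p_j>\lambda$ for $j\ge2$) is a minor sharpening the paper does not state, but the argument is the same.
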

\begin{proof}
Recall eq.~\eqref{eq:KPLemma3_diff}.
Now we say that $p_j \ge p_1(=\lambda)$ for $j=1,\dots k-1$, which yields the opposite inequality
\bq
\begin{split}
p_{k+1} - p_k &\ge \frac{\lambda^2}{k(k+1)} \biggl(\sum_{j=1}^k j\biggr) - \lambda 
\\
&= \frac{\lambda^2}{2} - \lambda \,.
\end{split}
\eq
Hence $p_{k+1} \ge p_k$ if $\lambda \ge 2$.
\end{proof}
\begin{remark}
The constraint $\lambda\ge2$ is a sufficient but not necessary lower bound.
It is a tight bound in the sense that it is attained for $k=1$ but it is not tight for $k\ge2$.
\end{remark}
\begin{corollary}
Once again, we can do better by employing $p_2=\frac12\lambda^2+\lambda$.
Then $p_{k+1} \ge p_k$ for fixed $k\ge2$ and $\lambda \ge q_k$, where
\bq
\label{eq:qk_def}
q_k = \frac{4}{\sqrt{5-4/\kappa}+1} \,.
\eq
Note that $\sqrt{5}-1 < q_k \le (\sqrt{33}-3)/2$.
\end{corollary}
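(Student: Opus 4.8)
The plan is to sharpen the previous proposition by feeding a better lower bound into eq.~\eqref{eq:KPLemma3_diff}. There the quantity of interest is $p_{k+1}-p_k = \frac{\lambda}{k(k+1)}\bigl(\sum_{j=1}^k jp_j\bigr) - \lambda$, and the earlier proof merely used $p_j\ge p_1=\lambda$ for every $j$. Instead I would keep the $j=1$ term exact, $p_1=\lambda$, and for $j\ge2$ invoke the strictly increasing chain~\eqref{eq:KP_incseq_pn} to write $p_j\ge p_2=\frac12\lambda^2+\lambda$. Since $\sum_{j=2}^k j = \kappa-1$, this yields the lower bound $\sum_{j=1}^k jp_j \ge \lambda + (\kappa-1)\bigl(\frac12\lambda^2+\lambda\bigr)$, which is the only structural step in the argument.

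Substituting this into~\eqref{eq:KPLemma3_diff}, using $k(k+1)=2\kappa$, dividing out the positive factor $\lambda$, and clearing denominators, the condition $p_{k+1}-p_k\ge0$ reduces to the single quadratic inequality $(\kappa-1)\lambda^2 + 2\kappa\lambda - 4\kappa \ge 0$. Because $\kappa\ge3>1$ for $k\ge2$, the leading coefficient is positive, so the inequality holds precisely when $\lambda$ lies at or above the unique positive root. The crux of the algebra is the discriminant: $4\kappa^2+16\kappa(\kappa-1)=4\kappa^2(5-4/\kappa)$, whose square root is $2\kappa\sqrt{5-4/\kappa}$. Taking the positive root and rationalizing by the factor $\sqrt{5-4/\kappa}+1$ collapses the expression to $q_k=4/(\sqrt{5-4/\kappa}+1)$ as in~\eqref{eq:qk_def}; I would display this rationalization explicitly, since it is what turns the raw root into the stated closed form.

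Finally, for the two-sided estimate I would use monotonicity in $\kappa$. As $\kappa$ grows, $5-4/\kappa$ increases, hence the denominator $\sqrt{5-4/\kappa}+1$ increases and $q_k$ strictly decreases. The infimum is the limit $\kappa\to\infty$, giving $4/(\sqrt5+1)=\sqrt5-1$ after rationalizing; this value is approached but never attained, so $q_k>\sqrt5-1$. The supremum occurs at the smallest admissible $\kappa$, namely $k=2$ with $\kappa=3$, where a direct evaluation and rationalization give $q_2=12/(\sqrt{33}+3)=(\sqrt{33}-3)/2$, which is attained. Combining these yields $\sqrt5-1<q_k\le(\sqrt{33}-3)/2$. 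I expect the main obstacle to be purely the bookkeeping in the algebra --- the discriminant factorization and the two rationalizations --- rather than any conceptual difficulty, since the replacement of $p_1$ by $p_2$ in the $j\ge2$ terms is immediate from~\eqref{eq:KP_incseq_pn}.
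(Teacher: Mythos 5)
Your proposal is correct and follows essentially the same route as the paper: both keep the $j=1$ term exact, bound $p_j\ge p_2=\frac12\lambda^2+\lambda$ for $j\ge2$, arrive at the same quadratic $(\kappa-1)\lambda^2+2\kappa\lambda-4\kappa\ge0$, and extract $q_k$ from its positive root, with the two-sided bound coming from $k=2$ and the limit $k\to\infty$. Your version merely spells out the discriminant factorization, the rationalization, and the monotonicity of $q_k$ in $\kappa$, which the paper leaves as ``elementary manipulations.''
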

\begin{proof}
We employ $p_2=\frac12\lambda^2+\lambda$.
Then we obtain the inequality
\bq
\begin{split}
  p_{k+1} - p_k &\ge \frac{\lambda}{k(k+1)}\biggl[(2+\dots+k)p_2 + p_1\,\biggr] -\lambda
\\
&= \frac{\lambda}{k(k+1)}\biggl[(2+\dots+k)\Bigl(\frac{\lambda^2}{2}+\lambda\Bigr) + \lambda\,\biggr] -\lambda
\\
&= \frac{\lambda}{k(k+1)}\biggl[(2+\dots+k)\frac{\lambda^2}{2} + (1+\dots+k)\lambda\,\biggr] -\lambda
\\
&= \frac{\lambda}{k(k+1)}\biggl[\frac{k(k+1)-2}{2}\frac{\lambda^2}{2} + \frac{k(k+1)\lambda}{2}\,\biggr] -\lambda
\\
&= \lambda\,\biggl[\frac{k(k+1)-2}{4k(k+1)}\lambda^2 + \frac{\lambda}{2} -1\,\biggr] \,.
\end{split}
\eq
To obtain the upper bound we set $p_{k+1}-p_k=0$ and solve the following quadratic equation.
\bq
(k(k+1)-2)\lambda^2 + 2k(k+1)\lambda -4k(k+1) = 0 \,.
\eq
Writing $k(k+1)=2\kappa$ yields a more concise equation
\bq
(\kappa-1)\lambda^2 + 2\kappa\lambda -4\kappa = 0 \,.
\eq
The term in $\lambda^2$ is positive for $k\ge2$, hence it lowers the root to $\lambda<2$.
The quadratic equation has two real roots of opposite sign. The positive root is (for $k\ge2$)
\bq
\begin{split}
\lambda &= \frac{\sqrt{\kappa(5\kappa-4)}-\kappa}{\kappa-1}
= \frac{4\kappa}{\sqrt{\kappa(5\kappa-4)}+\kappa} \,.
\end{split}
\eq
Elementary manipulations yield eq.~\eqref{eq:qk_def}.
The upper bound on $q_k$ is obtained by setting $k=2$ and the lower bound is obtained by taking the limit $k\to\infty$.
\end{proof}
\begin{remark}
  This is also a sufficient but not necessary lower bound.
  For large $k\gg1$ the asymptote for the bound is 
\bq  
q_k \to \sqrt{5} - 1 \simeq 1.236068 \,.
\eq
A graph of $q_k$ is plotted in Fig.~\ref{fig:graph_pk1_ge_pk} for $2 \le k \le 100$.
The asymptote $\sqrt{5}-1$ is plotted as the dashed line.
The value of $q_k$ decreases monotonically to the asymptote as $k$ increases.
\end{remark}

\newpage
\setcounter{equation}{0}
\section{\label{sec:pmf}Probability mass function}
The structure of the probability mass function (pmf) of the Poisson distribution of order $k$ was mapped in \cite{Mane_Poisson_k_CC23_6}, in a set of histogram plots.
It was stated in \cite{Mane_Poisson_k_CC23_6}, based on numerical observations but without proof,
that for sufficiently small values of the rate parameter $\lambda$, the pmf decreases monotonically for all values $n\ge k$.
Such an intuitive result should have a simple (and hopefully elegant) proof and should not rely on numerical simulations.
We treat $p_n = h_k(n;\lambda)$ below.
This section presents an analytical proof that for fixed $k\ge2$ and sufficiently small $\lambda>0$,
the value of $p_n$ decreases monotonically for $n\ge k$.

First we dispose of the case $k=1$.
It is well-known that for $k=1$, the pmf decreases monotonically with $n$ for all $\lambda<1$.
It is not so simple for $k\ge2$, because, as proved in \cite{KwonPhilippou} and noted several times already,
the points $p_n$ for $n=1,\dots,k$ always form a strictly increasing sequence, for all $\lambda>0$.
This increasing sequence does not exist in the standard Poisson distribution (because it is the single point $n=1$, hence not a `sequence').
Our attention below is therefore for $k\ge2$ and the points $n\ge k$.

By ``sufficiently small $\lambda>0$'' we mean that for fixed $k\ge2$,
there exists an open neighborhood of zero, {\em whose size depends on $k$ only},
and all the statements in the proof below are true if $\lambda$ lies in this neighborhood (and $\lambda>0$).
Here is an informal discussion to determine a ``sufficiently small'' value for $\lambda$.
\begin{enumerate}
\item
  For any fixed $k\ge2$, there are at most $n^k/k!$ tuples in the sum for $p_n$.
\item
  For $\lambda<1$, their sum never exceeds $n^k/k!$.
\item
  Hence if we choose $\lambda < k!/n^k$, the magnitude of the term in $\lambda^i$ will exceed that of the term in $\lambda^{i+1}$,
  for every power $i$ which appears in the sum of tuples in $p_n$.
\item
  Of course this upper bound on $\lambda$ depends on $n$ (as well as $k$), hence a more refined upper bound is required below.
\end{enumerate}
We formulate the overall proof as follows.
We first formulate an induction proof, conditional on the existence of a starting block of elements with properties to be specified below.
We then prove the existence of a starting block of elements with the requisite properties, to complete the induction proof.

\newpage
\begin{proposition}
\label{prop:inductionproof}(Induction proof)
For fixed $k\ge2$, and $n \ge 2k$, suppose that there exists a fixed $\lambda>0$ such that the block of $k+1$ contiguous elements
$\{p_{n-k},p_{n-k+1},\dots,p_n\}$ form a strictly decreasing sequence $p_{n-k} > p_{n-k+1} > \dots > p_n$.
Then $p_{n+1}-p_n < 0$, i.e.~the sequence can be extended to include $p_n > p_{n+1}$.
\end{proposition}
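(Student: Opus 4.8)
The plan is to argue directly from the difference formula eq.~\eqref{eq:KP_pn1_minus_pn}, which writes $p_{n+1}-p_n$ as a positively weighted sum of the $k$ elements $p_n,p_{n-1},\dots,p_{n-k+1}$ minus the term $(k/n)\lambda\,p_{n-k}$. Since the hypothesis supplies a strictly decreasing block $p_{n-k}>p_{n-k+1}>\dots>p_n$, the natural move is to bound each of the $k$ summed elements above by the largest member of the block, namely $p_{n-k}$. The assumption $n\ge 2k$ is what makes this legitimate: it forces every index appearing in the block to be at least $k\ge2$, so all the relevant $p_{n-j}$ are strictly positive and no negative-index (hence zero) terms intrude into the weighted sum.

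First I would record that the strict decrease gives $p_{n-j}<p_{n-k}$ for every $j=0,1,\dots,k-1$. Because the weights $(n-j)/\big(n(n+1)\big)$ are all positive, replacing each $p_{n-j}$ by $p_{n-k}$ in eq.~\eqref{eq:KP_pn1_minus_pn} produces the strict upper bound
\begin{equation}
p_{n+1}-p_n < \frac{\lambda\,p_{n-k}}{n(n+1)}\sum_{j=0}^{k-1}(n-j) - \frac{k}{n}\,\lambda\,p_{n-k}\,.
\end{equation}
Next I would evaluate the elementary sum $\sum_{j=0}^{k-1}(n-j)=kn-\frac{k(k-1)}{2}$ and factor $k\lambda p_{n-k}/n$ out of both terms. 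The step that carries the argument is the algebraic collapse of the resulting bracket: the leading $kn$ contribution cancels exactly against the subtracted $k/n$ term, leaving only the lower-order remainder $-(k+1)/\big(2(n+1)\big)$. This yields
\begin{equation}
p_{n+1}-p_n < -\,\frac{k(k+1)\,\lambda\,p_{n-k}}{2\,n(n+1)} < 0\,,
\end{equation}
where the final inequality is immediate since $k$, $\lambda$, $n$, $n+1$ and $p_{n-k}$ are all strictly positive. Hence $p_n>p_{n+1}$ and the decreasing run is extended by one element.

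I do not expect a genuine obstacle inside this proposition; it is a single clean bounding estimate. The only points demanding care are bookkeeping ones: confirming that the coefficient of $p_{n-k}$ is precisely what makes the two leading terms cancel, and verifying that the positivity and non-vanishing of every term is guaranteed by $n\ge 2k$. The substantive difficulty of the overall program is deferred to the companion result, where one must \emph{establish the existence} of an initial block of $k+1$ strictly decreasing elements for a sufficiently small $\lambda$; once that base case is in hand, the present proposition drives the induction to all larger $n$.
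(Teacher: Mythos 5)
Your proposal is correct and is essentially identical to the paper's own proof: both start from eq.~\eqref{eq:KP_pn1_minus_pn}, bound every $p_{n-j}$ ($j=0,\dots,k-1$) above by $p_{n-k}$ using the strictly decreasing hypothesis, evaluate $\sum_{j=0}^{k-1}(n-j)=kn-k(k-1)/2$, and watch the leading terms cancel to leave the same strictly negative bound $-k(k+1)\lambda p_{n-k}/\bigl(2n(n+1)\bigr)$. No differences worth noting beyond your (helpful but inessential) explicit remark on why $n\ge 2k$ keeps all indices in the safe range.
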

\begin{proof}
We employ eq.~\eqref{eq:KP_pn1_minus_pn}.
By hypothesis, $p_{n-j} < p_{n-k}$ for all $j\in[0,k-1]$, hence 
\bq
\begin{split}  
p_{n+1} - p_n &< \frac{\lambda p_{n-k}}{n(n+1)} \Bigl[nk -(1+\dots+(k-1))\Bigr] - \frac{k}{n}\,\lambda p_{n-k} 
\\
&= \frac{2n -(k-1) -2(n+1)}{2(n+1)}\,\frac{k}{n}\lambda p_{n-k} 
\\
&= -\frac{k+1}{2(n+1)}\,\frac{k}{n}\lambda p_{n-k}
\\
&< 0 \,.
\end{split}  
\eq
It remains to prove the existence of a starting block of $k+1$ elements with the requisite properties.
\end{proof}
\begin{proposition}
\label{lemma:block_k_2k}
For fixed $k\ge2$, there exists a fixed $\lambda>0$ such that the block of $k+1$ contiguous elements
$\{p_k,\dots,p_{2k}\}$ form a strictly decreasing sequence.
Define the upper bound
\bq
\label{eq:lam_monotone_upbound}
\lambda_k^< = \min\bigl\{t_k,\,k!/(2k)^k\bigr\} \,.
\eq
Then $p_k > p_{k+1} > \dots > p_{2k}$ for $0 < \lambda \le \lambda_k^<$. 
\end{proposition}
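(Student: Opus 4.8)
The plan is to establish the $k$ required inequalities $p_n > p_{n+1}$ for $n=k,k+1,\dots,2k-1$ one at a time, treating the very first comparison separately from the rest. Since $\lambda_k^< \le t_k$, the hypothesis $0<\lambda\le\lambda_k^<$ already gives $\lambda\le t_k$, so the opening inequality $p_k>p_{k+1}$ is supplied directly by Proposition~\ref{prop:tk}. The remaining comparisons, for $n\in[k+1,2k-1]$, are where the smallness condition $\lambda\le k!/(2k)^k$ in $\lambda_k^<$ comes into play. The split is forced by the structure of the polynomials: $p_k=h_k(k;\lambda)$ has lowest-order term $\lambda^1$ (from the single tuple $n_k=1$), whereas every $p_n$ with $n\in[k+1,2k]$ has lowest-order term $\lambda^2$ (no tuple has $\sum_j n_j=1$ once $n>k$), so $p_k$ cannot be compared to its neighbour on the same footing as the later elements.

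For $n\in[k+1,2k]$ I would first pin down the $\lambda^2$ coefficient of $p_n$ by counting the tuples with $\sum_j jn_j=n$ and $\sum_j n_j=2$: these are the unordered pairs $\{a,b\}$ with $a+b=n$ and $1\le a,b\le k$, the diagonal case $a=b=n/2$ (for $n$ even) contributing $1/2!$. A short count gives the coefficient $c_n=(2k-n+1)/2$ for either parity of $n$, which is \emph{strictly decreasing} with constant gap $c_n-c_{n+1}=\half$. Writing $p_n=c_n\lambda^2+R_n$, where $R_n=\sum_{i\ge3}a_{n,i}\lambda^i\ge0$ collects the higher powers (nonnegative because every coefficient of $h_k$ is positive), the comparison collapses to
\bq
p_n-p_{n+1}=\half\lambda^2+R_n-R_{n+1}\ge\half\lambda^2-R_{n+1}\,,
\eq
so it remains only to show $R_{n+1}<\half\lambda^2$ whenever $n+1\le2k$.

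To bound $R_{n+1}$ I would invoke the two facts from the informal discussion: at most $m^k/k!$ tuples contribute to $p_m$, and for $\lambda\le1$ each monomial $\lambda^{\sum n_j}/\prod n_j!$ is at most $1$. Factoring $\lambda^3$ out of $R_{n+1}$ and using $\lambda\le1$ bounds it by $\lambda^3$ times a combinatorial quantity no larger than $(2k)^k/k!$ for $n+1\le2k$; the condition $\lambda\le k!/(2k)^k$ then pushes $R_{n+1}$ below the leading gap $\half\lambda^2$. One also checks $\lambda_k^< < 1$ (both $k!/(2k)^k<1$ and $t_k<1$), which justifies using $\lambda\le1$ throughout. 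The main obstacle is precisely this tail estimate: the crude bound $R_{n+1}\le\lambda^3(2k)^k/k!$ discards constant factors, so confirming that the stated threshold $k!/(2k)^k$ (rather than merely a smaller multiple of it) actually suffices requires accounting carefully for how the factorial denominators $1/\prod n_j!$ depress the sum $\sum_{i\ge3}a_{n+1,i}$ well below the raw tuple count. This bookkeeping, rather than the structural idea, is where the effort concentrates.
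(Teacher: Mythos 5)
Your proof follows essentially the same route as the paper's: the opening inequality $p_k>p_{k+1}$ is delegated to Proposition~\ref{prop:tk} via $\lambda\le t_k$, the coefficient of $\lambda^2$ in $p_n$ for $n\in[k+1,2k]$ is computed to be $\frac12(2k-n+1)$ (decreasing in steps of $\frac12$), and the higher-order tail is controlled by the crude tuple-count bound $(2k)^k/k!$ together with $\lambda\le k!/(2k)^k$ and $\lambda<1$. The factor-of-two bookkeeping you flag as the remaining obstacle is not resolved in the paper either --- its proof simply invokes the informal claim that $\lambda < k!/n^k$ makes each power of $\lambda$ dominate the next and asserts the threshold $k!/(2k)^k$ --- so your write-up is, if anything, more explicit than the paper's about what still needs checking at that step.
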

\begin{proof}
  The proof proceeds in several steps.
\item
  For $n=k+1,\dots,2k$, the lowest power of $\lambda$ which appears in $p_n$ is $\lambda^2$.
\item
  For $n=k+j$, where $j=1,\dots,k$, the term in $\lambda^2$ is given by the sum of the tuples with the values
  ($n_j=n_k=1$ and all other $n_i$ zero, i.e.~$n=j+k$), ($n_{j+1}=n_{k-1}=1$ and all other $n_i$ zero, i.e.~$n=(j+1)+(k-1)$), etc.
\item
  For $j\in[1,k-1]$, there are $\lfloor(k+1-j)/2\rfloor$ such tuples and for $j=k$ there is exactly one tuple $(0,\dots,0,2)$.
\item
  Then $p_{k+j} = \frac12(k+1-j)\lambda^2 + O(\lambda^3)$ for $j\in[1,k]$.
\item
  {\em Hence the value of the coefficient of $\lambda^2$ decreases by $\frac12$ as $j$ increases in unit steps, for $j=[1,k]$.}
\item
  Hence for sufficiently small $\lambda>0$, $p_{k+j} > p_{k+j+1}$ for all $j=[1,\dots,k-1]$.  
\item
  In this context, for ``sufficiently small'' we can set $\lambda < \inf\{k!/n^k, n\in[k+1,2k]\} = k!/(2k)^k$.
\item
  We moreover already know that $p_k>p_{k+1}$ for $0<\lambda \le t_k$.
\item
  Define the upper bound $\lambda_k^< = \min\bigl\{t_k,\,k!/(2k)^k\bigr\}$.
\item
  Hence we can restrict the value of $\lambda$ to the interval
  $0 < \lambda < \lambda_k^<$.
\item
  This is probably a sufficient but not necessary upper bound.
\item
  Then the elements $\{p_k,\dots,p_{2k}\}$ form a strictly decreasing sequence $p_k > p_{k+1} > \dots > p_{2k}$.
\end{proof}
\noindent
Note that the upper bound $\lambda_k^<$ depends only on $k$ and {\em not} on $n$.
Hence we have a suitable starting block of $k+1$ elements, with an upper bound for $\lambda$,
and the overall proof follows from Prop.~\ref{prop:inductionproof} by induction on $n$.

\newpage
\begin{remark}
  Numerical calculations indicate that for $2\le k \le 10^4$,
  the upper bound value $\lambda_k^< = 2/(k+1)$ suffices to yield a monotonically decreasing pmf for all $n\ge k$.
  This bound implies the value of the mean is $\mu_k(\lambda)=\kappa\lambda\le k$.
  Intuitively this makes sense because it was proved in \cite{GeorghiouPhilippouSaghafi} that the mode is less than the mean,
  and it was proved in \cite{Mane_Poisson_k_CC23_5} that if the mode is nonzero, its value must be at least $k$,
  hence if the mean is less than $k$ the mode must be zero.
  In such a circumstance one might expect that the pmf decreases monotonically for all $n\ge k$.
  The numerical calculations also indicate that $2/(k+1)$ is a sufficient but not necessary upper bound.
  The determination of a more optimal value for the upper bound $\lambda_k^<$ is a matter for future research.
\end{remark}
\begin{remark}
  Numerical calculations indicate the optimal value for the upper bound $\lambda_k^<$ might be given by solving for the positive real root of the equation $p_{k+1}=p_{k+2}$.
  This is the value of $\lambda$ at which the two histogram bins at $k+1$ and $k+2$ are equal and form a ``shoulder'' in the histogram of the pmf for $n > k$.
  Numerical calculations find no exceptions of monotonicity for $2\le k \le 10^4$.
  Fig.~\ref{fig:graph_hist_k4} displays a histogram plot of $p_n$ for $k=4$ and $\lambda=0.6026076$.
  The points at $n=k+1=5$ and $n=k+2=6$ have equal height, to within numerical precision.
  The histogram decreases monotonically (or is nonincreasing) for $n \ge k(=4)$. 
  Nevertheless, this claim for the optimal upper bound should be regarded as preliminary.
\end{remark}

\newpage
\setcounter{equation}{0}
\section{\label{sec:mode}Mode}
The following upper and lower bounds for the mode have been proved (Theorem 2.1 in \cite{GeorghiouPhilippouSaghafi})
\bq  
\label{eq:mode_Georghiou_etal_Thm2.1}
\bigl\lfloor \kappa\lambda \bigr\rfloor - \kappa + 1 - \delta_{k,1} \le m_k(\lambda) \le \bigl\lfloor \kappa\lambda \bigr\rfloor \,.
\eq
The upper bound in eq.~\eqref{eq:mode_Georghiou_etal_Thm2.1} is attained, hence sharp.
Numerical studies reported in \cite{Mane_Poisson_k_CC23_6} led to the following conjecture for an improved lower bound for the mode,
for cases where the value of the mode is nonzero.
\begin{conjecture}
\label{conj:mode_low_bound}
For fixed $k\ge2$ and $\lambda$ sufficiently large so the value of the mode is nonzero, the mode is bounded below as follows.
\bq
\label{eq:non_asymp_mode_lower_bound}
m_k(\lambda) \ge \lfloor\kappa\lambda\rfloor -k \,.
\eq
It was shown in \cite{Mane_Poisson_k_CC23_6} that eq.~\eqref{eq:non_asymp_mode_lower_bound} is attained and is hence a sharp lower bound.
It was proved (Prop.~(2.4) in \cite{Mane_Poisson_k_CC23_6}) that if the mode is nonzero then $\lfloor\kappa\lambda\rfloor -k \ge 0$,
hence the right-hand side in eq.~\eqref{eq:non_asymp_mode_lower_bound} is never negative.
\end{conjecture}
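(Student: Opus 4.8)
The plan is to deduce the conjecture from the sharper statement $\mu_k(\lambda)-m_k(\lambda)\le k$, i.e.\ that the mean exceeds the mode by at most $k$. This does imply eq.~\eqref{eq:non_asymp_mode_lower_bound}: writing $m=m_k(\lambda)$ and using $\mu_k(\lambda)=\kappa\lambda$, the inequality $\kappa\lambda-m\le k$ gives $m\ge\kappa\lambda-k\ge\lfloor\kappa\lambda\rfloor-k$, since $m$ is an integer. So it suffices to prove $\mu_k(\lambda)\le m+k$ whenever the mode is nonzero. Recalling $p_n=h_k(n;\lambda)$, that $\sum_{n\ge0}p_n=e^{k\lambda}$, and (multiplying eq.~\eqref{eq:KP_rec_pn} by $n$ and summing over $n$) that $\sum_{n\ge0}n\,p_n=\kappa\lambda\,e^{k\lambda}$, the target is equivalent to the single signed-sum inequality $\sum_{n\ge0}(n-m-k)\,p_n\le0$.

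First I would recast this as a reflection about the center $c=m+k$. Since $c$ is an integer and terms with negative index vanish, $\sum_{n\ge0}(n-c)p_n=\sum_{t=1}^{c}t\,(p_{c+t}-p_{c-t})+\sum_{t>c}t\,p_{c+t}$, and note that this whole quantity equals $(\mu_k(\lambda)-m-k)\,e^{k\lambda}$, so showing it is $\le0$ is exactly the goal. The strategy is to show every paired contribution $t\,(p_{c+t}-p_{c-t})$ is nonpositive and that the unpaired deep-tail remainder is negligible. For the \emph{near} pairs $1\le t\le k$ both indices $c\pm t$ lie in $[m,m+2k]$, hence at or to the right of the mode; there the desired inequality $p_{c+t}\le p_{c-t}$ is just monotone decrease of the pmf beyond the mode, which I expect to follow from eq.~\eqref{eq:KP_pn1_minus_pn} once a decreasing block of length $k+1$ is established just past the mode, paralleling the sign computation in Prop.~\ref{prop:inductionproof}.

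The \emph{far} pairs $t>k$ are where the difficulty concentrates and where the unproved assumption enters. For these the left index $c-t=m+k-t$ has fallen below the mode, so one is comparing a value deep in the right tail, $p_{m+k+t}$, against a value $p_{m+k-t}$ situated at or to the left of the mode. Monotone decrease to the right of $m$ no longer suffices, because the left flank is governed by the strictly increasing block $p_1<\dots<p_k$ of eq.~\eqref{eq:KP_incseq_pn} and, for larger $\lambda$, by secondary structure around $n=k$; there is no purely local reason forcing $p_{m+k+t}\le p_{m+k-t}$. The assumption on which the partial proof rests is precisely this reflection inequality $p_{m+k+t}\le p_{m+k-t}$ for all admissible $t$, equivalently that the distribution is no heavier a distance $t$ to the right of $c=m+k$ than the same distance to its left. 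It is plausible because centering at $m+k$, a full $k$ steps to the right of the mode, biases the comparison in favor of the left side, and it is borne out numerically; but I do not see how to extract it from the recurrences alone.

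The main obstacle is therefore exactly this nonlocal tail-domination inequality for the far pairs: it couples the global decay of the right tail to the shape of the left flank, a comparison that the four-term recurrences eqs.~\eqref{eq:KM_pn1_minus_pn}--\eqref{eq:KP_pn1_minus_pn} do not appear to settle by induction. Granting it, the unpaired remainder $\sum_{t>c}t\,p_{c+t}$ (supported on $n>2c$) is absorbed using the rapid decay of $p_n$ in the far tail, and one concludes $\sum_{n\ge0}(n-m-k)p_n\le0$, hence $\mu_k(\lambda)\le m_k(\lambda)+k$ and the claimed bound eq.~\eqref{eq:non_asymp_mode_lower_bound}. The inequalities $m_k(\lambda)\le\lfloor\kappa\lambda\rfloor$ and (for nonzero mode) $m_k(\lambda)\ge k$ quoted above serve here only as consistency checks.
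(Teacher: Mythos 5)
You have a genuine gap, and it sits exactly where you wave your hands: the unpaired remainder. In your decomposition with $c=m+k$,
\bq
\bigl(\mu_k(\lambda)-c\bigr)\,e^{k\lambda} \;=\; \sum_{t=1}^{c}t\,\bigl(p_{c+t}-p_{c-t}\bigr)\;+\;\sum_{t>c}t\,p_{c+t}\,,
\eq
the second sum is \emph{strictly positive}, since $p_n>0$ for every $n$ once $\lambda>0$. So it cannot be ``negligible'' or ``absorbed by rapid decay'': even if you grant every paired term $t(p_{c+t}-p_{c-t})\le0$, the desired conclusion $\sum_{n\ge0}(n-c)p_n\le0$ does not follow. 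You would need the paired contributions to be strictly negative by a quantitative margin at least equal to the tail, and no such estimate is supplied or even sketched. The equality case documented in the paper makes the gap vivid: for $k=2$, $\lambda=4/3$ the mode is $2$ and the mean is $\kappa\lambda=4$, so $\mu_k(\lambda)-m-k=0$; your signed sum is then exactly zero while its unpaired tail is strictly positive, meaning the paired terms must cancel the tail \emph{exactly}. A sign argument plus a decay heuristic can never reproduce that. (In that same example the pmf is not monotone on $[m,m+2k]$ --- the paper notes $\{p_2,p_3,p_4\}$ is not nonincreasing --- so your near-pair step fails there as well.)

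The paper's derivation avoids all of this by being purely local, which is why its single unproved assumption is so much weaker than yours. Setting $n$ so that the mode is $n-k$, it applies the recurrence \eqref{eq:KP_rec_pn} at the single index $n$: under the one assumption $p_n\le\min(p_{n-1},\dots,p_{n-k})$, one gets $np_n=\lambda\sum_{j=1}^k jp_{n-j}\ge\lambda p_n\sum_{j=1}^k j=\kappa\lambda p_n$, cancels $p_n>0$ to obtain $\kappa\lambda\le n=m_k(\lambda)+k$, and takes the floor. No moments, no pairing, no tail. By contrast, your route is conditional on strictly stronger hypotheses (monotone decrease on all of $[m,m+2k]$ for the near pairs, plus a global reflection inequality $p_{c+t}\le p_{c-t}$ coupling the right tail to the left flank for the far pairs) \emph{and} still leaves the tail-domination step unproved. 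If you want to salvage the pairing idea, you must convert it into a quantitative statement --- for example, a lower bound on $\sum_{t\le c}t\,(p_{c-t}-p_{c+t})$ in terms of $\sum_{t>c}t\,p_{c+t}$ --- and that appears at least as hard as the conjecture itself.
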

\noindent
It was proved in \cite{Mane_Poisson_k_CC23_5} that if the mode is nonzero, its value must be at least $k$, i.e. $m_k(\lambda) \ge k$.
It was shown that $m_k(\lambda) = k$
(i) for $k=2$ in \cite{PhilippouFibQ},
(ii) for $k=2,3,4$ in \cite{KwonPhilippou} and
(iii) for $2 \le k \le 14$ in \cite{Mane_Poisson_k_CC23_3}.
The findings in \cite{Mane_Poisson_k_CC23_3} were based on numerical calculations, which also indicated that $m_k(\lambda) > k$ for all tested values $k\ge15$.

Here we offer a partial proof of Conjecture \ref{conj:mode_low_bound}.
The term `partial proof' signifies that the derivation is conditional on an assumption which,
although plausible and supported by numerical evidence, is as yet not proved, as will be explained below.

We first fix $k\ge2$ and choose an integer $n$ such that $n-k$ is the (nonzero) value of the mode.
The fact that the mode is nonzero (and also $m_k(\lambda)\ge k$) necessarily implies $n-k\ge k$, i.e.~$n\ge 2k$.
{\em We suppose for now that the mode is unique.}
We shall discuss the case of a bimodal distribution below.
Next comes a key unproved assumption:
\begin{quote}
{\em We assume that the pmf is nonincreasing from $n-k$ through $k$, i.e.~$p_{n-k} \ge p_{n-k+1} \ge \dots \ge p_n$.}
\end{quote}
The above assumption is supported by numerical evidence but is as yet not proved, which is why
eq.~\eqref{eq:non_asymp_mode_lower_bound} retains its status as a conjecture.

Given all of the above, by assumption $p_j \ge p_n$ for $j=n-k,\dots,n-1$. 
We then process the recurrence for $p_n$ in eq.~\eqref{eq:KP_rec_pn} as follows.
\bq  
\label{eq:mean_mode_dist3}
\begin{split}
np_n &= \lambda\,\bigl(p_{n-1} +2p_{n-2} +\dots + kp_{n-k}\bigr)
\\
&\ge \lambda p_n\,(1+\dots+k)
\\
&= \frac{k(k+1)}{2}\,\lambda p_n 
\\
&= \kappa\lambda p_n \,.
\end{split}
\eq
Cancelling $p_n$ and noting that the mean is $\mu_k = \kappa\lambda$ (derived in \cite{PhilippouMeanVar}) yields the inequality $\mu_k(\lambda) \le n$.
Since the mode is $m_k(\lambda) = n-k$, it follows that the difference between the mean and the mode does not exceed $k$.
\bq
\label{eq:mean_mode_dist5}
\mu_k(\lambda) - m_k(\lambda) \le n - (n-k) = k \,.
\eq
We reexpress this as a lower bound for the mode.
Since the mode is always an integer, we employ the floor function to derive the following lower bound for the mode
\bq
\label{eq:mode_low_bound_ge}
m_k(\lambda) \ge \lfloor\kappa\lambda\rfloor - k \,.
\eq
This is the bound in eq.~\eqref{eq:non_asymp_mode_lower_bound}.
We now discuss the caveats in the derivation of eq.~\eqref{eq:mode_low_bound_ge}.
\begin{enumerate}
\item
  It was shown in \cite{Mane_Poisson_k_CC23_6} that equality in eq.~\eqref{eq:mode_low_bound_ge} is attained.
  The example offered was $k=2$ and $\kappa\lambda=4$, hence $\lambda=4/3$ (Fig.~9 in \cite{Mane_Poisson_k_CC23_6}).
  Fig.~\ref{fig:graph_hist_k2} displays a histogram plot of $p_n$ for $k=2$ and $\lambda=4/3$.
  The mode is $2$ and the lower bound is $\lfloor\kappa\lambda\rfloor-k = \lfloor4\rfloor-2 = 2$,
  hence eq.~\eqref{eq:mode_low_bound_ge} is satisfied.
  However, the sequence of $k+1$ points $\{p_2,p_3,p_4\}$ is {\em not} nonincreasing.
  This demonstrates that the assumption of a nonincreasing sequence of $k+1$ points $\{p_{n-k},\dots,p_n\}$ (with the mode at $n-k$) is open to challenge.
\item
  The value of $\lambda$ was increased to $\lambda=4.02373/3$ and the resulting histogram plot of $p_n$ is displayed in Fig.~\ref{fig:graph_hist_k2_bimodal}.
  The histogram is bimodal, to within numerical precision, with joint modes at $2$ and $4$.
  The value of the lower bound is $\lfloor\kappa\lambda\rfloor = \lfloor 4.02373\rfloor -2 = 2$.
  The mode value of $4$ satisfies eq.~\eqref{eq:mode_low_bound_ge} with strict inequality
  while the mode value of $2$ satisfies eq.~\eqref{eq:mode_low_bound_ge} with equality.
  The derivation of eq.~\eqref{eq:mode_low_bound_ge} may possibly be valid for a bimodal distribution,
  but it is ambiguous which point to select as ``the mode'' in the derivation.
\item
  It was shown in \cite{Mane_Poisson_k_CC23_3} that for any $k\ge2$, the Poisson distribution of order $k$ has a denumerable infinity of
  double modes consisting of pairs of consecutive integers.
  Then eq.~\eqref{eq:mode_low_bound_ge} works, because the sequence of $k+1$ points $\{p_{n-k},\dots,p_n\}$ is nonincreasing, with $n-k$ selected to be the lower mode value.
\item
  Observe that equality is attained in eq.~\eqref{eq:mode_low_bound_ge} only if {\em all} the numbers in the sequence $\{p_{n-k},\dots,p_n\}$ are equal.
  If even one pair of elements in the sequence exhibits a strict decrease, $p_j > p_{j+1}$, eq.~\eqref{eq:mode_low_bound_ge} becomes a strict inequality
\bq
\label{eq:mode_low_bound_gt}
m_k(\lambda) > \lfloor\kappa\lambda\rfloor - k \,.
\eq
Numerical evidence indicates there are {\em no} instances of three or more consecutive equal values of the histogram bins $p_n$ (for $\lambda>0$), but to date this is not proved.
\item
  We explain the need for the mode to be nonzero.
  If the (unique) mode is zero, it has been remarked several times already that the sequence $\{p_1,\dots,p_k\}$ is {\em strictly increasing}
  for any fixed $k\ge2$ and $\lambda>0$.
  This invalidates the derivation of eq.~\eqref{eq:mode_low_bound_ge}.
\item
  For the same reason, we require $n\ge2k$, to avoid including any points in the interval $\{p_1,\dots,p_{k-1}\}$, because their presence
  invalidates the derivation of eq.~\eqref{eq:mode_low_bound_ge}.
  It was proved in \cite{Mane_Poisson_k_CC23_5} that if the mode is nonzero, its value is at least $k$, hence $n\ge2k$.  
\end{enumerate}
Given all of the above, we can revise Conjecture \ref{conj:mode_low_bound} as follows.
\begin{conjecture}
\label{conj:mode_low_bound_update}
For fixed $k\ge2$ and $\lambda$ sufficiently large so the value of the mode is nonzero, the mode is bounded below as follows.
\begin{enumerate}
\item
  If the mode is unique, the mode is bounded strictly as follows
\bq
\label{eq:non_asymp_mode_lower_bound_gt}
m_k(\lambda) > \lfloor\kappa\lambda\rfloor -k \,.
\eq
\item
  If the distribution is bimodal with the modes at a pair of consecutive integers, eq.~\eqref{eq:non_asymp_mode_lower_bound_gt} is applicable for both mode values.
\item
  If the distribution is bimodal with modes at nonconsecutive integers,
  the higher mode value is bounded by eq.~\eqref{eq:non_asymp_mode_lower_bound_gt} 
  and the lower mode value is bounded as follows
\bq
\label{eq:non_asymp_mode_lower_bound_ge}
m_k(\lambda) \ge \lfloor\kappa\lambda\rfloor -k \,.
\eq
\end{enumerate}
It was shown in \cite{Mane_Poisson_k_CC23_6} that eq.~\eqref{eq:non_asymp_mode_lower_bound_ge} is attained and is hence a sharp lower bound.
It was proved (Prop.~(2.4) in \cite{Mane_Poisson_k_CC23_6}) that if the mode is nonzero then $\lfloor\kappa\lambda\rfloor -k \ge 0$,
hence the right-hand sides in eqs.~\eqref{eq:non_asymp_mode_lower_bound_gt} and \eqref{eq:non_asymp_mode_lower_bound_ge} are never negative.
Numerical evidence in \cite{Mane_Poisson_k_CC23_6} indicates the Poisson distribution of order $k$ does not have three or more joint modes.
\end{conjecture}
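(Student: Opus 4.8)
The plan is to bypass the (false-in-general) block-monotonicity hypothesis and instead prove the single aggregate inequality that actually drives the bound. Writing $n-k$ for the (lower) mode and using the exact recurrence \eqref{eq:KP_rec_pn} in the form $np_n=\lambda\sum_{j=1}^{k}jp_{n-j}$, the desired conclusion $\mu_k(\lambda)=\kappa\lambda\le n$ is \emph{equivalent} to
\[
\sum_{j=1}^{k} j\,p_{n-j}\ \ge\ \kappa\,p_n .
\]
The partial proof obtains this from the pointwise estimates $p_{n-j}\ge p_n$ for every $j$, but the example $k=2$, $\lambda=4/3$ shows those pointwise estimates can fail (there $p_{n-1}<p_n$) while the aggregate still holds. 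First I would therefore take the weighted inequality itself as the target, and exploit the structural fact that $p_{n-k}$ is the global maximum and carries the \emph{largest} weight $k$: the surplus $k\,(p_{n-k}-p_n)\ge 0$ is available to absorb any deficits $j\,(p_{n-j}-p_n)<0$ arising from the intermediate indices.

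Next I would control those intermediate deficits using the four-term recurrence \eqref{eq:KMrec} (equivalently \eqref{eq:KM_pn1_minus_pn}), setting up an induction that propagates information from the mode outward. Concretely, I would introduce the running quantity $S_n=\sum_{j=1}^{k}j\,p_{n-j}-\kappa p_n$ and attempt to show $S_n\ge 0$ at the index $n=m_k(\lambda)+k$ by relating $S_n$ to $S_{n-1}$ through \eqref{eq:KMrec}; the hope is that the weights $j$ combine with the recurrence coefficients so that any secondary bump (shoulder) lying to the right of the mode is dominated by the mode term. Because the distribution can carry as many as four peaks, no log-concavity is available, so this step must use the explicit recurrence rather than global unimodality.

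For the strict bound \eqref{eq:non_asymp_mode_lower_bound_gt} in the unimodal case I would argue that equality $S_n=0$ forces $\sum_{j=1}^{k} j\,(p_{n-j}-p_n)=0$; whenever the block $\{p_{n-k},\dots,p_n\}$ happens to be nonincreasing every summand is nonnegative, so equality then forces all $k+1$ block values to coincide, i.e.\ at least three consecutive equal bins for $k\ge2$. Ruling these out reduces to the (numerically supported, still unproved) claim that three or more consecutive equal values of $p_n$ never occur; promoting that to a lemma would complete part~1. The delicate residual configuration is precisely the knife-edge $\kappa\lambda\in\mathbb{Z}$ with a \emph{non}-monotone block, as in $k=2$, $\lambda=4/3$, where equality is produced by an emerging second peak rather than by a flat block; I would treat such cases as bimodal transitions and fold them into parts~2 and~3.

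The bimodal statements would then follow by choosing the index $n-k$ appropriately: for modes at consecutive integers the block starting at the \emph{lower} mode is genuinely nonincreasing, so the computation \eqref{eq:mean_mode_dist3} applies verbatim and yields \eqref{eq:non_asymp_mode_lower_bound_ge}, with equality only in the flat case; for modes at nonconsecutive integers I would run the aggregate argument at each mode separately, obtaining the strict bound \eqref{eq:non_asymp_mode_lower_bound_gt} at the higher mode and the weak bound \eqref{eq:non_asymp_mode_lower_bound_ge} at the lower one. The main obstacle is the second paragraph: proving $\sum_{j} j\,p_{n-j}\ge\kappa p_n$ directly from the recurrence in the presence of shoulders to the right of the mode, together with the supporting no-three-equal-bins lemma needed for strictness. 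These are exactly the points that keep the result a conjecture rather than a theorem.
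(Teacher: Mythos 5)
Keep in mind what the paper itself does with this statement: it is a \emph{conjecture}, and the paper offers only a conditional (``partial'') derivation --- the computation \eqref{eq:mean_mode_dist3} under the unproved hypothesis that the block $\{p_{n-k},\dots,p_n\}$ is nonincreasing --- followed by the caveats that motivate the three-case form. The parts of your proposal that are actually carried out coincide with that treatment: your use of $p_{n-j}\ge p_n$ to get $\kappa\lambda\le n$ is exactly \eqref{eq:mean_mode_dist3}; your observation that pointwise bounds are stronger than necessary parallels Remark~\ref{remark:single_pt_pn_min}; your handling of the knife-edge example $k=2$, $\lambda=4/3$ by folding it into the bimodal cases, and your appeal to the unproved no-three-consecutive-equal-bins claim for strictness, are precisely the paper's caveats. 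On those points you are at parity with the paper, no further.

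The genuine flaw is in your central new idea, the ``aggregate target'' $S_n=\sum_{j=1}^{k}j\,p_{n-j}-\kappa p_n$. By the very recurrence \eqref{eq:KP_rec_pn} you invoke, $\sum_{j=1}^{k}j\,p_{n-j}=np_n/\lambda$, hence identically
\[
S_n \;=\; \frac{p_n}{\lambda}\,\bigl(n-\kappa\lambda\bigr)\,.
\]
Since $p_n>0$ and $\lambda>0$, the statement ``$S_n\ge0$ at $n=m_k(\lambda)+k$'' is not a weaker sufficient condition: it \emph{is} the conjecture $m_k(\lambda)+k\ge\kappa\lambda$ rewritten, with no independent content. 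Consequently the proposed induction is vacuous --- the step $S_{n-1}\ge0\Rightarrow S_n\ge0$ says only $n-1\ge\kappa\lambda\Rightarrow n\ge\kappa\lambda$, which is trivially true and carries no information, while the base case is the full conjecture. The same collapse explains your own example: at $k=2$, $\lambda=4/3$ the aggregate ``holds'' with equality not because a surplus at the mode absorbs a deficit, but tautologically, because $\kappa\lambda=n=4$ there. No manipulation of the recurrences \eqref{eq:KP_rec_pn} or \eqref{eq:KMrec} alone can close the argument; the structural content --- why the mode index is large enough --- must be injected from outside, and that is exactly the role of the paper's unproved block hypothesis (or its weakening $p_n\le\min(p_{n-k},\dots,p_{n-1})$), which is a genuine statement about the shape of the pmf near the mode rather than a restatement of the target. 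Your plan therefore leaves the statement exactly where the paper leaves it: a conjecture.
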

\begin{remark}
  The minimum value of $\lambda$ for the mode to be nonzero is not known precisely, although a few cases have been solved.
  The case $k=2$ was solved in \cite{PhilippouFibQ} and the cases $k=3$ and $4$ were solved in \cite{KwonPhilippou}.
  See also upper and lower bounds in \cite{Mane_Poisson_k_CC23_6} and numerical results in \cite{Mane_Poisson_k_CC23_3}.
\end{remark}
\begin{remark}
  It is reasonable to suppose that once the index of the unique mode has been passed, or that of the higher mode value in the case of a bimodal distribution,
  the histogram bins (values of $p_n$) of the probability mass function decrease monotonically, or at least do not increase, but there is as yet no proof of this.
\end{remark}
\begin{remark}
  \label{remark:single_pt_pn_min}
  Note that Conjecture \ref{conj:mode_low_bound_update} only requires the value of the single point $p_n$ not exceed the values of any the previous $k$ points
  $\{p_{n-k},\dots,p_{n-1}\}$. Those $k$ points need not form a nonincreasing sequence.
  All we require to derive eq.~\eqref{eq:mean_mode_dist3} is $p_n \le \min(p_{n-k},\dots,p_{n-1})$.
\end{remark}
\begin{remark}
  Remark \ref{remark:single_pt_pn_min} carries the consequence that if there is any sequence of $k+1$ points $\{p_{n-k},\dots,p_n\}$
  where $p_n$ has the least value, i.e.~$p_n \le \min(p_{n-k},\dots,p_{n-1})$, then the mean cannot exceed $n$.
  Since it has been proved that the mean is never less than the mode \cite{GeorghiouPhilippouSaghafi},
  this implies that there is no such sequence in the pmf until the value of $n$ is at least one unit larger than the index of a (non-unique) mode: $n \ge m_k(\lambda)+1$.
  It is a matter for future research.
\end{remark}

\newpage
\section{\label{sec:conc}Conclusion}
The major item of this note was in Sec.~\ref{sec:pmf}, a proof that the probability mass function (pmf) of the Poisson distribution of order $k$
decreases monotonically for all $n\ge k$, for fixed $k\ge2$ and a sufficiently small value of the rate parameter $\lambda>0$.
(For $1 \le n \le k$, it has been proved \cite{KwonPhilippou} that the pmf increases strictly, for all $k\ge2$ and $\lambda>0$.)
The second main result was in Sec.~\ref{sec:mode}, a partial proof that the difference (mean $-$ mode) does not exceed $k$.
The term `partial proof' was employed because the derivation is conditional on an assumption which,
although plausible and supported by numerical evidence, is as yet not proved.
In addition, several improvements to published inequalities were proved (sharper bounds, etc.) and also some new inequalities.


\newpage

\newpage
\begin{figure}[!htb]
\centering
\includegraphics[width=0.75\textwidth]{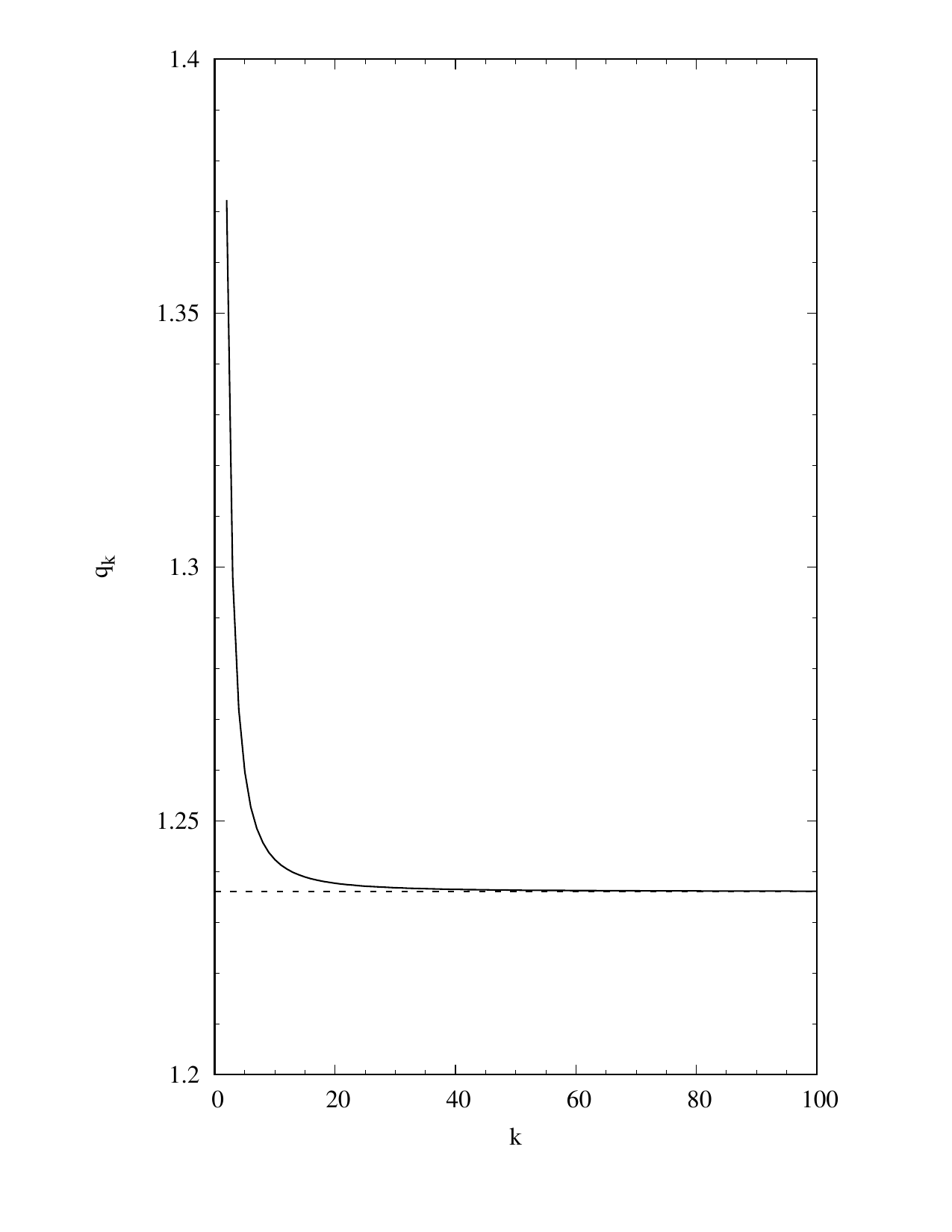}
\caption{\small
  \label{fig:graph_pk1_ge_pk}
  Graph of $q_k$ for the Poisson distribution of order $k$ for $2 \le k \le 100$.
  The asymptote $\sqrt{5}-1$ is plotted as the dashed line.}
\end{figure}

\newpage
\begin{figure}[!htb]
\centering
\includegraphics[width=0.75\textwidth]{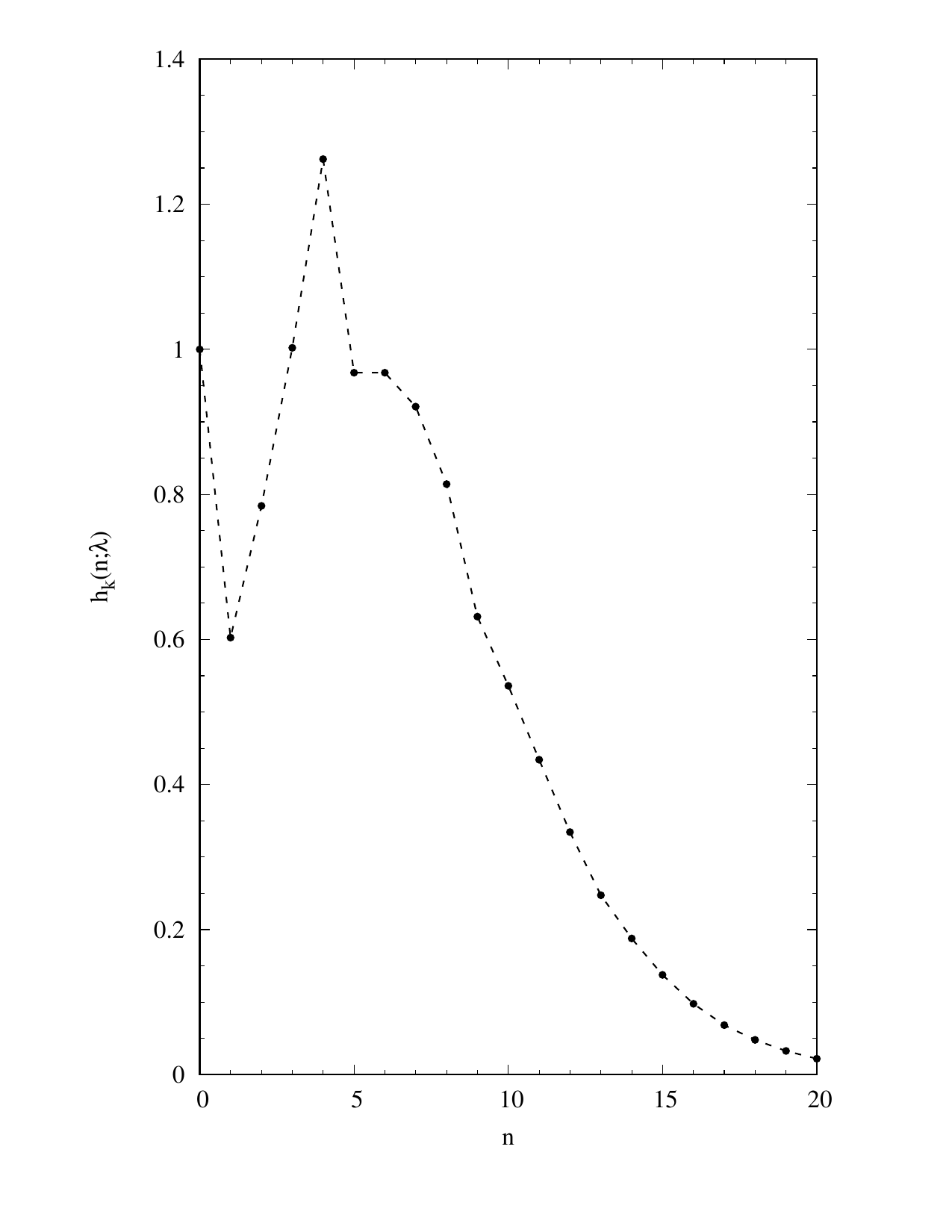}
\caption{\small
  \label{fig:graph_hist_k4}
  Histogram plot of $h_k(n;\lambda)\;(=p_n)$ of the Poisson distribution of order $4$ and $\lambda=0.6026076$.}
\end{figure}

\newpage
\begin{figure}[!htb]
\centering
\includegraphics[width=0.75\textwidth]{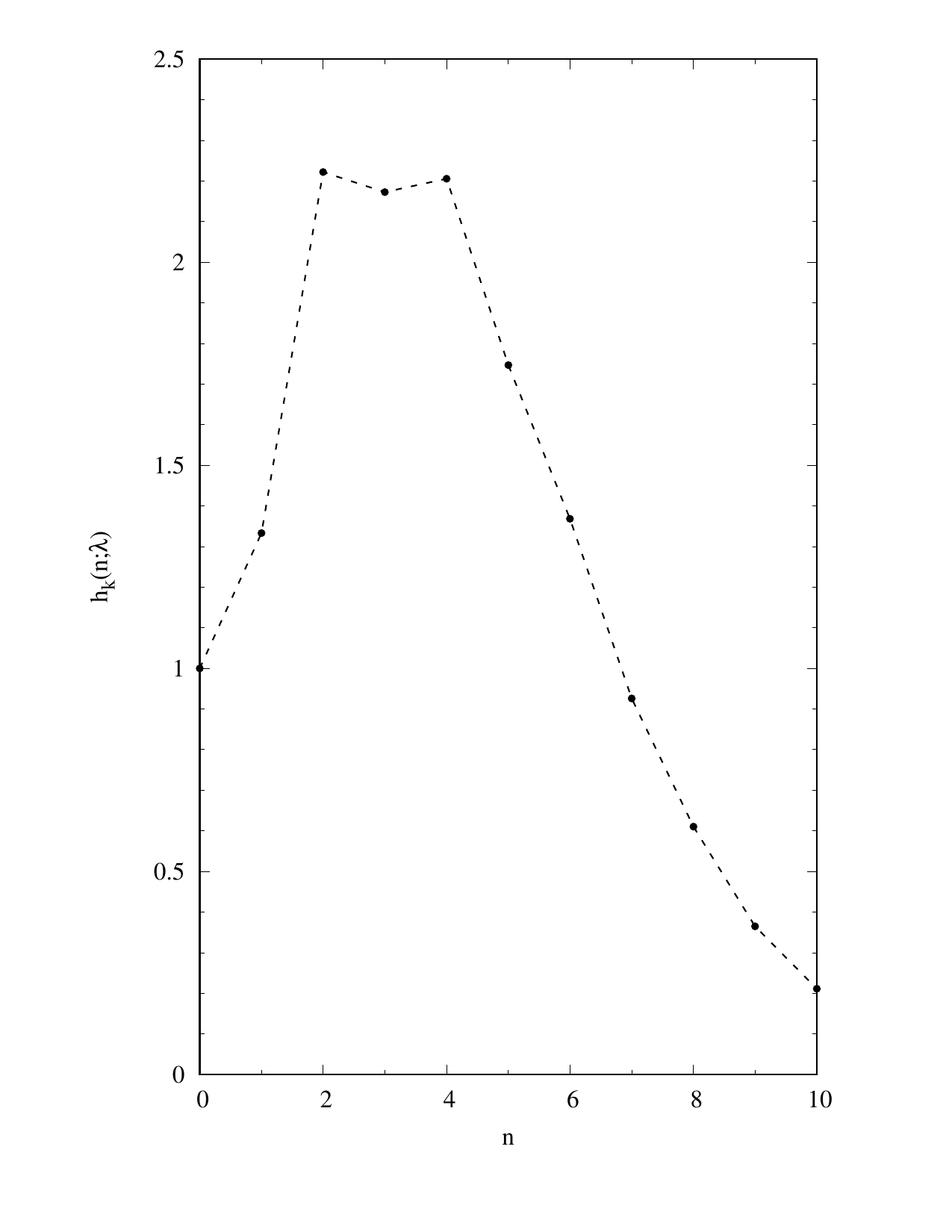}
\caption{\small
  \label{fig:graph_hist_k2}
  Histogram plot of $h_k(n;\lambda)\;(=p_n)$ of the Poisson distribution of order $2$ and $\lambda=4/3$.}
\end{figure}

\newpage
\begin{figure}[!htb]
\centering
\includegraphics[width=0.75\textwidth]{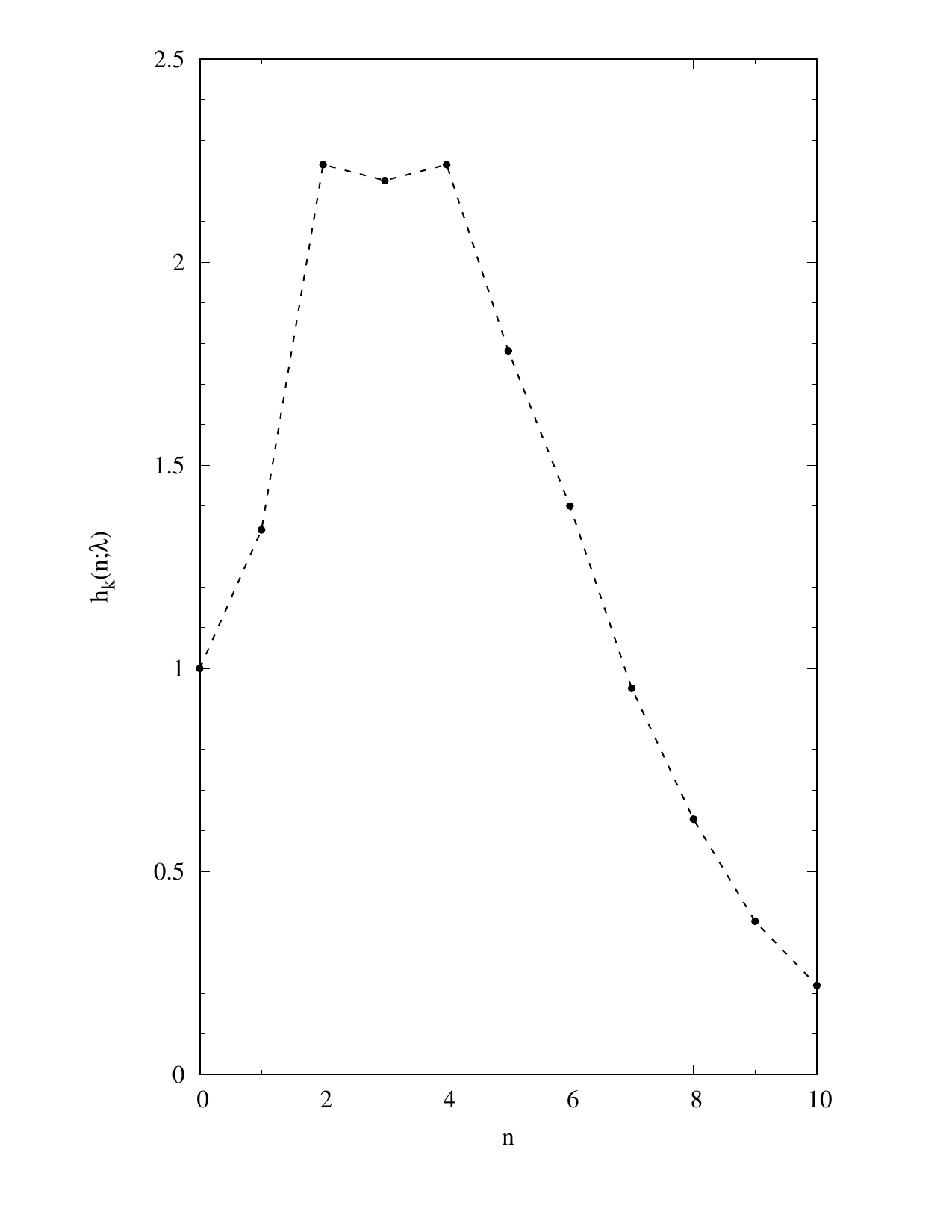}
\caption{\small
  \label{fig:graph_hist_k2_bimodal}
  Histogram plot of $h_k(n;\lambda)\;(=p_n)$ of the Poisson distribution of order $2$ and $\lambda=4.02373/3$.}
\end{figure}


\begin{thebibliography}{99}
\bibitem{Adelson1966}
R.M.~Adelson, ``Compound Poisson Distributions''
{\it Operational Research Quarterly {\bf 17}, 73--75 (1966).}
\bibitem{KwonPhilippou}
Y.~Kwon and A.N.~Philippou, ``The Modes of the Poisson Distribution of Order 3 and 4''
{\it Entropy {\bf 25}, 699 (2023).}
\bibitem{Mane_Poisson_k_CC23_6}
S.R.~Mane, ``Structure of the probability mass function of the Poisson distribution of order $k$''
{\it http://arxiv.org/abs/2309.13493 [math.PR] (2023).}
\bibitem{KostadinovaMinkova2013}
K.Y.~Kostadinova and L.D.~Minkova, ``On the Poisson process of order $k$''
{\it Pliska Stud. Math. Bulgar. {\bf 22}, 117--128 (2013).}
\bibitem{PhilippouFibQ}
A.N.~Philippou, ``{\sc a note on the modes of the poisson distribution of order $k$}''
{\it Fibonacci Quarterly {\bf 52}, 203--205 (2014).}
\bibitem{Mane_Poisson_k_CC23_3}
S.R.~Mane, ``Asymptotic results for the Poisson distribution of order $k$''
{\it arXiv:2309.05190 [math.PR] (2023).}
\bibitem{GeorghiouPhilippouSaghafi}
C.~Georghiou, A.N.~Philippou and A.~Saghafi, ``On the Modes of the Poisson Distribution of Order $k$''
\bibitem{Mane_Poisson_k_CC23_5}
S.R.~Mane, ``First double mode of the Poisson distribution of order $k$''
{\it arXiv:2309.09278 [math.PR] (2023).}
\bibitem{PhilippouMeanVar}
A.N.~Philippou, ``Poisson and compound Poisson distributions of order $k$ and some of their properties''
\end{thebibliography}
\end{document}